\title{Restrained Italian domination in graphs}
\date{}
\author {
Babak Samadi$^a$, Morteza Alishahi$^b$, Iman Masoumi$^c$ and Doost Ali Mojdeh$^d$\thanks{Corresponding author}\vspace{3mm}\\
$^a$Babolsar, Iran\\
{\tt samadibabak62@gmail.com}\vspace{2mm}\\
$^b$Department of Mathematics\\
Islamic Azad University, Nazarabad Branch, Nazarabad, Iran\\
{\tt morteza.alishahi@gmail.com}\vspace{2mm}\\
$^c$Department of Mathematics\\
University of Tafresh, Tafresh, Iran\\
{\tt i$_{-}$masoumi@yahoo.com}\vspace{2mm}\\
$^d$Department of Mathematics\\
University of Mazandaran, Babolsar, Iran\\
{\tt damojdeh@umz.ac.ir}\vspace{2mm}\\
}
\date{}
\newtheorem{theorem}{Theorem}[section]
\newtheorem{lemma}[theorem]{Lemma}
\newtheorem{proposition}[theorem]{Proposition}
\theoremstyle{definition}
\theoremstyle{remark}
\begin{document}

\maketitle

\begin{abstract} For a graph $G=(V(G),E(G))$, an Italian dominating function (ID function) $f:V(G)\rightarrow\{0,1,2\}$ has the property that for every vertex $v\in V(G)$ with $f(v)=0$, either $v$ is adjacent to a vertex assigned $2$ under $f$ or $v$ is adjacent to least two vertices assigned $1$ under $f$. The weight of an ID function is $\sum_{v\in V(G)}f(v)$. The Italian domination number is the minimum weight taken over all ID functions of $G$.

In this paper, we initiate the study of a variant of ID functions. A restrained Italian dominating function (RID function) $f$ of $G$ is an ID function of $G$ for which the subgraph induced by $\{v\in V(G)\mid f(v)=0\}$ has no isolated vertices, and the restrained Italian domination number $\gamma_{rI}(G)$ is the minimum weight taken over all RID functions of $G$. We first prove that the problem of computing this parameter is NP-hard, even when restricted to bipartite graphs and chordal graphs as well as planar graphs with maximum degree five. We prove that $\gamma_{rI}(T)$ for a tree $T$ of order $n\geq3$ different from the double star $S_{2,2}$ can be bounded from below by $(n+3)/2$. Moreover, all extremal trees for this lower bound are characterized in this paper. We also give some sharp bounds on this parameter for general graphs and give the characterizations of graphs $G$ with small or large $\gamma_{rI}(G)$.

\noindent \ \ \
\end{abstract}
{\bf Keywords:} Restrained Italian dominating function, restrained Italian domination number, restrained domination number, trees, domination number, NP-hard.\vspace{1mm}\\
{\bf MSC 2010:} 05C69.


\section{Introduction and preliminaries}

Throughout this paper, we consider $G$ as a finite simple graph with vertex set $V(G)$ and edge set $E(G)$. We use \cite{w} as a reference for terminology and notation which are not explicitly defined here. The {\em open neighborhood} of a vertex $v$ is denoted by $N(v)$, and its {\em closed neighborhood} is $N[v]=N(v)\cup \{v\}$. The {\em minimum} and {\em maximum degrees} of $G$ are denoted by $\delta(G)$ and $\Delta(G)$, respectively. Given subsets $A,B\subseteq V(G)$, by $[A,B]$ we mean the set of all edges with one end point in $A$ and the other in $B$. For a given subset $S\subseteq V(G)$, by $G[S]$ we represent the subgraph induced by $S$ in $G$. A tree $T$ is a {\em double star} if it contains exactly two vertices that are not leaves. A double star with $p$ and $q$ leaves attached to each support vertex, respectively, is denoted by $S_{p,q}$.

A set $S\subseteq V(G)$ is called a {\em dominating set} if every vertex not in $S$ has a neighbor in $S$. The {\em domination number} $\gamma(G)$ of $G$ is the minimum cardinality among all dominating sets of $G$. A {\em restrained dominating set} (RD set) in a graph $G$ is a dominating set $S$ in $G$ for which every vertex in $V(G)\setminus S$ is adjacent to another vertex in $V(G)\setminus S$. The {\em restrained domination number} (RD number) of $G$, denoted by $\gamma_{r}(G)$, is the smallest cardinality of an RD set of $G$. This concept was formally introduced in \cite{dhhlr} (albeit, it was indirectly introduced in \cite{tp}).

For a function $f:V(G)\rightarrow\{0,1,2\}$, we let $V^{f}_{i}=\{v\in V(G)\mid f(v)=i\}$ for each $0\leq i\leq2$ (we simply write $f=(V_{0},V_{1},V_{2})$ if there is no ambiguity with respect to the function $f$ and the graph $G$). We call $\omega(f)=f(V(G))=\sum_{v\in V(G)}f(v)$ as the {\em weight} of $f$. A {\em Roman dominating function} (RD function) of a graph $G$ is a function $f:V(G)\rightarrow\{0,1,2\}$ such that if $f(v)=0$ for some $v\in V(G)$, then there exists $w\in N(v)$ such that $f(u)=2$ (\cite{cdhh}). In 2015, Pushpam and Padmapriea (\cite{pp}) introduced the concept of restrained Roman domination in graphs as follows. An RD function $f:V(G)\rightarrow\{0,1,2\}$ is called a {\em restrained Roman dominating function} (RRD function for short) if $G[V_{0}]$ has no isolated vertices. The {\em restrained Roman domination number} (RRD number) $\gamma_{rR}(G)$ is the minimum weight $\sum_{v\in V(G)}f(v)$ of an RRD function $f$ of $G$.

Chellali et al. in \cite{chhm} introduced an Italian dominating function (also known as a Roman $\{2\}$-dominating function) $f$ as follows. An \textit{Italian dominating function} (ID function) is a function $f:V(G)\rightarrow \{0,1,2\}$ with the property that for every vertex $v\in V(G)$ with $f(v)=0$, it follows that $f(N(v))\geq 2$. That is, either there is a vertex $u\in N(v)$ with $f(u)=2$ or at least two vertices $x,y\in N(v)$ with $f(x)=f(y)=1$. A \textit{restrained Italian dominating function} (RID function) of $G$ is an ID function of $G$ for which $G[V_{0}]$ has no isolated vertices. The minimum weight of an RID function of $G$ is called the \textit{restrained Italian domination number} (RID number) of $G$, denoted by $\gamma_{rI}(G)$.

In this paper, we investigate the restrained Italian domination in graphs. We prove that the problem of computing the RID number is NP-hard even when restricted to some well-known families of graphs and give some sharp lower and upper bounds on this parameter. In section $3$, we prove that $\gamma_{rI}(T)\geq(n+3)/2$ for any tree $T\neq S_{2,2}$ of order $n\geq3$. Moreover, the characterization of all trees for which the equality holds is given in this paper. We also give the characterizations of graphs with small or large RID numbers.

By a $\gamma(G)$-set or a $\gamma_{r}(G)$-set, we mean a dominating set or a restrained dominating set in $G$ of cardinality $\gamma(G)$ or $\gamma_{r}(G)$, respectively. Also, a $\gamma_{rI}(G)$-function is an RID function $f$ of $G$ with weight $\omega(f)=\gamma_{rI}(G)$.

\section{Complexity and computational issues}

We consider the problem of deciding whether a graph $G$ has an RID function of weight at most a given integer. That is stated in the following decision problem.

$$\begin{tabular}{|l|}
  \hline
  \mbox{RISTRAINED ITALIAN DOMINATION problem (RID problem)}\\
  \mbox{INSTANCE: A graph $G$ and an integer $j\leq|V(G)|$.}\\
  \mbox{QUESTION: Is there an RID function $f$ of weight at most $j$?}\\
  \hline
\end{tabular}$$

In what follows, we make use of the DOMINATING SET problem which is known to be NP-complete for planar graphs with maximum degree three (\cite{gj}), bipartite graphs and chordal graphs (\cite{hhs}).

$$\begin{tabular}{|l|}
  \hline
  \mbox{DOMINATING SET problem}\\
  \mbox{INSTANCE: A graph $G$ and an integer $k\leq|V(G)|$.}\\
  \mbox{QUESTION: Is there a dominating set of cardinality at most $k$?}\\
  \hline
\end{tabular}$$\vspace{0.25mm}

\begin{theorem}\label{Comp}
The RID problem is NP-complete even when restricted to bipartite graphs, chordal graphs and planar graphs with maximum degree five.
\end{theorem}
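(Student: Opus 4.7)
The plan is to verify membership in NP and then reduce DOMINATING SET (known NP-complete on each of the three families by \cite{gj,hhs}) in polynomial time. Membership is routine: given $f\colon V(G')\to\{0,1,2\}$, both $\omega(f)\le j$ and the RID conditions can be checked in time polynomial in $|V(G')|$.

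For NP-hardness, from an instance $(G,k)$ of DOMINATING SET with $V(G)=\{v_1,\dots,v_n\}$ I construct $G'$ by attaching, for each $v_i$, a pendant path on three new vertices: add vertices $a_i,b_i,c_i$ and edges $v_ia_i,a_ib_i,b_ic_i$. Set $j=3n+k$. Class preservation is immediate: each attached piece is a tree, so chordality and bipartiteness pass from $G$ to $G'$; and $\Delta(G')=\Delta(G)+1$ together with planarity being preserved by pendant attachments means that starting from the NP-complete instances on planar graphs with $\Delta(G)\le 3$ yields $G'$ planar with $\Delta(G')\le 4\le 5$.

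The correctness of the reduction reduces to the identity $\gamma_{rI}(G')=3n+\gamma(G)$. For the upper bound, from a $\gamma(G)$-set $D$ I define $f$ by assigning $(f(v_i),f(a_i),f(b_i),f(c_i))=(2,0,0,2)$ when $v_i\in D$ and $(0,0,2,1)$ otherwise; a short verification shows $f$ is an RID function of weight $4|D|+3(n-|D|)=3n+\gamma(G)$, with the ID condition at any $v_i\notin D$ secured by its weight-$2$ neighbour in $D$, and the restrained condition at $v_i$ witnessed by $a_i\in V_0$. For the lower bound, let $f$ be any RID function of $G'$. A case analysis on each pendant path, distinguishing $f(b_i)\in\{0,1,2\}$ and using the leaf constraint $f(c_i)\ge 1$ together with the restrained condition at $a_i$ and $b_i$, shows that the contribution $f(v_i)+f(a_i)+f(b_i)+f(c_i)$ is always at least $3$, and is at least $4$ whenever $f(v_i)\ge 1$ or $f(a_i)=2$. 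Set $D_f:=\{v_i\colon f(v_i)\ge 1 \text{ or } f(a_i)=2\}$; the ID condition at any $v_i\notin D_f$ gives $f(N_G(v_i))\ge 2-f(a_i)\ge 1$, forcing $v_i$ to have a $G$-neighbour in $D_f$, so $D_f$ is a dominating set of $G$. Summing the per-vertex contributions yields $\omega(f)\ge 4|D_f|+3(n-|D_f|)=3n+|D_f|\ge 3n+\gamma(G)$, completing the identity.

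The main obstacle is the gadget case analysis: the restrained condition at $a_i$ and $b_i$ couples their labels to the label at $v_i$, so one must enumerate all feasible tuples $(f(v_i),f(a_i),f(b_i),f(c_i))$ and pin down precisely when the gadget weight can drop to $2$ (only in the tight configuration $(2,0,0,2)$, which itself forces $f(v_i)=2$). This step is bookkeeping rather than technically deep, but it is where the restrained condition enters in an essential way and any slip there would invalidate the lower bound.
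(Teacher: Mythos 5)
Your proof is correct and follows the same overall strategy as the paper's: a polynomial reduction from DOMINATING SET that attaches a fixed gadget to every vertex of $G$ and establishes an identity of the form $\gamma_{rI}(G')=cn+\gamma(G)$. The difference is the gadget. The paper attaches seven new vertices per $v_i$ (a pendant leaf $g_i$ plus a double star hung at $v_i$), gets $c=5$, and its lower-bound accounting isolates the set $X$ of vertices of $V_0\cap V(G)$ with no positively labelled $G$-neighbour, charging those gadgets $6$ instead of $5$; you attach only a pendant path $a_ib_ic_i$, get $c=3$, and instead enlarge the dominating set to $D_f=\{v_i: f(v_i)\ge1 \text{ or } f(a_i)=2\}$, charging those gadgets $4$ instead of $3$. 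The two accountings are structurally parallel, and your case analysis on the path is sound: using $f(c_i)\ge1$ and the restrained condition at $a_i$ and $b_i$, the only configuration with $f(a_i)+f(b_i)+f(c_i)=2$ is $(0,0,2)$, which indeed forces $f(v_i)=2$, and every configuration with $f(v_i)\ge1$ or $f(a_i)=2$ costs at least $4$. Your smaller gadget even yields a marginally stronger statement in the planar case, since $\Delta(G')=\Delta(G)+1\le4$ rather than the paper's $\Delta(G)+2\le5$; NP-completeness restricted to planar graphs of maximum degree four of course implies the stated result for maximum degree five.
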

\begin{proof}
The problem clearly belongs to NP since checking that a given function is indeed an RID function of weight at most $j$ can be done in polynomial time. Set $j=5n+k$. Let $G$ be a graph with $V(G)=\{v_{1},\cdots,v_{n}\}$. For any $1\leq i\leq n$, we add a new vertex $g_{i}$ and a double star $T_{i}$ with $V(T_{i})=\{a_{i},b_{i},c_{i},d_{i},e_{i},f_{i}\}$ in which $a_{i}$ and $b_{i}$ are the support vertices, $N_{T_{i}}(a_{i})\setminus\{b_{i}\}=\{c_{i},d_{i}\}$ and $N_{T_{i}}(b_{i})\setminus\{a_{i}\}=\{e_{i},f_{i}\}$. We then join $v_{i}$ to both $a_{i}$ and $g_{i}$, for all $1\leq i\leq n$. Let $G'$ be the constructed graph.

Let $f$ be a $\gamma_{rI}(G')$-function. Clearly, $f(V(T_{i})\cup\{g_{i}\})\geq5$ for all $1\leq i\leq n$. Moreover, if there exists a vertex $v_{j}\in V(G)\cap V_{0}$ which does not have any neighbor in $(V_{1}\cup V_{2})\cap V(G)$, then it is not difficult to see that $f(V(T_{j})\cup\{g_{j}\})\geq6$. We define $X$ to be the set of such vertices, that is, $X=\{v_{j}\in V(G)\cap V_{0}\mid  \mbox{$v$ has no neighbor in}\ (V_{1}\cup V_{2})\cap V(G)\}$. We have
\begin{equation}\label{INE1}
\begin{array}{lcl}
\gamma_{rI}(G')=\omega(f)&=&\sum_{i=1}^{n}f(V(T_{i})\cup\{g_{i}\})+f(V(G))\\
&=&\sum_{v_{i}\in V(G)\setminus X}f(V(T_{i})\cup\{g_{i}\})+\sum_{v_{i}\in X}f(V(T_{i})\cup\{g_{i}\})+f(V(G))\\
&\geq&5|V(G)\setminus X|+6|X|+|(V_{1}\cup V_{2})\cap V(G)|\\
&=&5n+|X|+|(V_{1}\cup V_{2})\cap V(G)|.
\end{array}
\end{equation}
On the other hand, $S=X\cup((V_{1}\cup V_{2})\cap V(G))$ is a dominating set in $G$. Therefore, $\gamma(G)\leq|S|=|X|+|(V_{1}\cup V_{2})\cap V(G)|$. By using the inequality (\ref{INE1}), we deduce that $\gamma_{rI}(G')\geq5n+\gamma(G)$.

Conversely, let $S'$ be a $\gamma(G)$-set. We define $f'$ by $f'(a_{i})=f'(b_{i})=f'(v)=0$ for each $v\in V(G)\setminus S'$, and $f'(x)=1$ for the other vertices $x$. It is readily checked that $f'$ is an RID function of $G'$ with weight $5n+|S'|$. Therefore, $\gamma_{rI}(G')\leq5n+\gamma(G)$. This shows that $\gamma_{rI}(G')=5n+\gamma(G)$.

Our reduction is now completed by taking into account the fact that $\gamma_{rI}(G')\leq j$ if and only if $\gamma(G)\leq k$. Since the DOMINATING SET problem is NP-complete for both bipartite graphs and chordal graphs, we have the same with the RID problem. Moreover, it is NP-complete for planar graphs with maximum degree five since the DOMINATING SET problem is NP-complete for planar graphs with maximum degree three.
\end{proof}

As a consequence of Theorem \ref{Comp}, we conclude that the problem of computing the RID number is NP-hard, even when restricted to bipartite graphs and chordal graphs as well as planar graphs with maximum degree five. In consequence, it would be desirable to bound the RID number in terms of several different invariants of graphs.

\begin{proposition}\label{Prop3}
For any connected graph $G$ of order $n\geq3$ and size $m$,
$$\gamma_{rI}(G)\geq min\{\gamma_{rR}(G),n-2m/5,n-(2m-5)/3\}.$$
\end{proposition}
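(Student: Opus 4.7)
The plan is to take a $\gamma_{rI}(G)$-function $f=(V_0,V_1,V_2)$ and partition $V_0$ according to how its RID condition is witnessed at each vertex: set $A=\{v\in V_0\mid N(v)\cap V_2\neq\emptyset\}$ and $B=V_0\setminus A$, so every vertex of $B$ must have at least two neighbors in $V_1$. I would then split into three cases, depending on whether $B$ and $V_2$ are empty, and show that each case forces $\omega(f)$ to meet one of the three terms in the minimum.

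If $B=\emptyset$, then $f$ itself is an RRD function, hence $\gamma_{rR}(G)\le\omega(f)=\gamma_{rI}(G)$, delivering the first term. If $B\neq\emptyset$ and $V_2=\emptyset$, then $A=\emptyset$ and $V_0=B$; the disjoint edge sets $[V_0,V_1]$ and $E(G[V_0])$ have sizes at least $2|V_0|$ (two neighbors in $V_1$ per vertex of $B$) and $|V_0|/2$ (no isolated vertex of $G[V_0]$), so $m\ge 5|V_0|/2$. Since $\omega(f)=n-|V_0|$ in this case, it follows that $\omega(f)\ge n-2m/5$.

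For $B\neq\emptyset$ and $V_2\neq\emptyset$, I would apply the same edge-counting idea to the three pairwise disjoint sets $[V_2,A]$, $[V_1,B]$, and $E(G[V_0])$, obtaining $m\ge|A|+2|B|+(|A|+|B|)/2=(3|A|+5|B|)/2$. Using $|B|\ge 1$, this gives $|V_0|=|A|+|B|\le(2m-2)/3$, and combining with $|V_2|\ge 1$ and the identity $\omega(f)=n-|V_0|+|V_2|$ yields $\omega(f)\ge n-(2m-5)/3$. The main technical point is to notice that this third case genuinely needs both $|B|\ge 1$ and $|V_2|\ge 1$ in order to produce the exact constant $5$ in $(2m-5)/3$; once the partition $V_0=A\cup B$ is set up, the rest reduces to disjoint edge counting and elementary arithmetic.
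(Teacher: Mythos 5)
Your argument is correct and follows essentially the same route as the paper: both proofs split into the cases where $f$ is already an RRD function, where $V_2=\emptyset$, and where $V_2\neq\emptyset$ but some vertex of $V_0$ lacks a $V_2$-neighbor, and both then count the same disjoint edge sets (using at least two $V_1$-neighbors per such vertex and $|E(G[V_0])|\geq|V_0|/2$ from the no-isolated-vertex condition) to reach $|V_0|\leq 2m/5$ and $|V_0|\leq(2m-2)/3$ respectively. Your explicit $A,B$ partition of $V_0$ is a slightly cleaner bookkeeping device than the paper's phrasing, but the underlying estimates and constants are identical.
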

\begin{proof}
Let $f=(V_{0},V_{1},V_{2})$ be a $\gamma_{rI}(G)$-function. If every vertex in $V_{0}$ is adjacent to a vertex in $V_{2}$, then $f$ is an RRD function of $G$. Therefore, $\gamma_{rI}(G)\geq \gamma_{rR}(G)$ (and so, $\gamma_{rI}(G)=\gamma_{rR}(G)$). So, we may assume that some vertices in $V_{0}$ do not have any neighbor in $V_{2}$. If $V_{2}=\emptyset$, then every vertex in $V_{0}$ is adjacent to at least two vertices in $V_{1}$. On the other hand, $|[V_{0},V_{0}]|\geq|V_{0}|/2$ since $G[V_{0}]$ has no isolated vertices. Therefore,
$$2m\geq2|[V_{0},V_{0}]|+2|[V_{0},V_{1}]|\geq5|V_{0}|.$$
We now have, $n=|V_{0}|+|V_{1}|\leq2m/5+\gamma_{rI}(G)$. Therefore, $\gamma_{rI}(G)\geq n-2m/5$. So, we assume that $V_{2}\neq\emptyset$ and $\gamma_{rI}(G)<\gamma_{rR}(G)$. In such a situation, at least one vertex in $V_{0}$ does not have any neighbor in $V_{2}$. We get
$$2m\geq2|[V_{0},V_{0}]|+2|[V_{0},V_{1}\cup V_{2}]|\geq|V_{0}|+2(|V_{0}|-1)+4.$$
Therefore, $|V_{0}|\leq2(m-1)/3$. We now have,
$$n=|V_{0}|+|V_{1}|+|V_{2}|\leq2(m-1)/3+\gamma_{rI}(G)-|V_{2}|\leq2(m-1)/3+\gamma_{rI}(G)-1$$
implying that $\gamma_{rI}(G)\geq n-(2m-5)/3$. This completes the proof.
\end{proof}

We conclude this section by showing that the lower bound given in Proposition \ref{Prop3} is sharp. For the sake of convenience, we let $\eta(G)=\mbox{min}\{\gamma_{rR}(G),n-2m/5,n-(2m-5)/3\}$. Let $G'$ be obtained from $k\geq2$ copies of $K_{2}$, a new vertex $v$ and joining $v$ to any vertex of the $k$ copies of $K_{2}$ (this graph was given in \cite{jk}). It is easy to see that $\gamma_{rI}(G')=2=\gamma_{rR}(G')=\eta(G')$. Moreover, $\gamma_{rI}(S_{2,2})=4=n-2m/5=\eta(S_{2,2})$. Also, if $G''=G'-x$ for any vertex $x\neq v$ and $k\geq4$, we deduce that $\gamma_{rI}(G'')=3=n-(2m-5)/3=\eta(G'')$.


\section{Trees}

Our main aim in this section is to bound the RID number of a tree from below just in terms of its order. Moreover, we characterize all trees attaining the bound.

\begin{theorem}\label{Tree}
Let $T$ be a tree of order $n\geq3$ different from the double star $S_{2,2}$. Then, $\gamma_{rI}(T)\geq(n+3)/2$.
\end{theorem}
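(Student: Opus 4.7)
The plan is to prove the inequality by induction on $n$, with base cases $n \in \{3,4,5,6,7\}$ verified by direct inspection of the finitely many trees of each such order; these base cases are required because the inductive reductions below can remove up to six vertices of $T$ at once. Rewriting the target inequality in the equivalent form $|V_1| + 3|V_2| \geq |V_0| + 3$ will be helpful for tracking the contribution of each vertex.

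For the induction step I would take a tree $T$ of order $n \geq 8$ with $T \neq S_{2,2}$ and a $\gamma_{rI}(T)$-function $f=(V_0,V_1,V_2)$. The first observation to record is that no leaf of $T$ can belong to $V_0$: if a leaf $\ell$ with unique neighbour $u$ lay in $V_0$, then $u$ would be forced to lie in $V_0$ by the restrained condition, while the Italian condition at $\ell$ would simultaneously force $f(u)\geq 2$, a contradiction. Hence $f(\ell)\geq 1$ for every leaf $\ell$. Next, fix a longest path $v_0 v_1 \cdots v_d$, so that $v_0$ is a leaf and, by maximality, every neighbour of $v_1$ distinct from $v_2$ is a leaf; write $L = N(v_1)\setminus\{v_2\}$ and $k=|L|$, so $f(L)\geq k$.

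In the generic case $k\geq 2$ I would set $T_1 = T - L$ and construct an RID function $g$ of $T_1$ from $f|_{T_1}$ by modifying values on at most two vertices (namely $v_1$ and, if necessary, $v_2$); this produces $\omega(g)\leq \omega(f)-k+c$ with $c\in\{0,1\}$ in every subcase except the degenerate one where $v_1,v_2\in V_0$ and $v_2$ has no $V_0$-neighbour distinct from $v_1$. In the degenerate subcase I would instead remove $L\cup\{v_1,v_2\}$ and apply the inductive hypothesis componentwise to the resulting (possibly disconnected) subgraph. Assuming $T_1$ (or each relevant component) differs from $S_{2,2}$ and has order at least $3$, the inductive hypothesis yields
\[
\omega(f) \;\geq\; \gamma_{rI}(T_1) + k - c \;\geq\; \frac{n-k+3}{2} + k - c \;=\; \frac{n+k+3-2c}{2} \;\geq\; \frac{n+3}{2},
\]
because $k\geq 2c$ in each subcase. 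The remaining case $k=1$ (the pendant at $v_1$ is just the edge $v_0 v_1$) would be handled by refining the reduction to involve the subtree rooted at $v_2$, whose children are, by maximality of the chosen path, either leaves or support vertices carrying a single pendant edge apiece.

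The main obstacle is the bookkeeping in the degenerate subcases: (i) when $T_1$ or a component of the forest that remains after removal has order less than $3$; (ii) when that residual tree equals $S_{2,2}$, forcing $T$ to be one of a small list of exceptional trees that must be verified by hand; and (iii) when the deletion disconnects $T$, so that the inductive estimates must be summed across several components while ensuring no component has order below $3$. The exceptional role of $S_{2,2}$ is natural in this framework: within $S_{2,2}$ one has $V_0=\{u,v\}$, $V_1$ equal to the four leaves, and $V_2=\emptyset$, so $|V_1|+3|V_2|-|V_0|=2$, one unit short of the required $|V_1|+3|V_2|\geq|V_0|+3$, which is the equivalent form of the target inequality $\gamma_{rI}(T)\geq (n+3)/2$.
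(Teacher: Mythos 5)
Your overall strategy --- induct on $n$, strip the leaves at the end of a diametral path, and push the weight deficit into a residual tree --- is essentially the same as the paper's, and your two supporting observations (no leaf lies in $V_0$, and the reformulation $|V_1|+3|V_2|\geq |V_0|+3$) are correct. The non-degenerate reductions do close: if $f(v_1)\geq 1$ then $f|_{T_1}$ is already an RID function of $T_1=T-L$ of weight at most $\omega(f)-k$, and if $f(v_1)=0$ then the restrained condition forces $f(v_2)=0$, raising $v_1$ to $1$ costs one unit, and you get $c=1$ with $k\geq 2$, exactly as you state.

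However, the cases you defer to ``bookkeeping'' are not routine; they are where the actual work lies, and at least one of them defeats your inequality chain as written. Take $k=2$ and the degenerate subcase in which the unique component of $T-(L\cup\{v_1,v_2\})$ is an $S_{2,2}$ attached to $v_2$, so $n=10$. Your componentwise estimate yields only $\omega(f)\geq k+\gamma_{rI}(S_{2,2})=2+4=6$, which is strictly less than $(n+3)/2=6.5$; the theorem is still true for this tree, but only a direct analysis of how the Italian condition at $v_2$ forces a $2$ onto the $S_{2,2}$ part (and of what that costs on the remaining leaves of $S_{2,2}$) shows $\gamma_{rI}(T)\geq 7$. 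The same shortfall occurs when $T_1=T-L=S_{2,2}$ with $k=2$ and $c=1$ (the chain gives $4+2-1=5<(8+3)/2$), and components of order $1$ or $2$ do not satisfy the inductive bound $(n_C+3)/2$ at all, so they must be absorbed by a separate count of forced nonzero values. These are precisely the configurations the paper spends most of its proof on (its Subcases 2.1.1--2.1.4 and 2.2), and your $k=1$ branch is likewise only gestured at. So the proposal is a sound plan with the easy half executed, but the exceptional residual cases must actually be enumerated and verified before it becomes a proof.
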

\begin{proof}
We proceed by induction on the order $n\geq3$ of $T$. The result is obvious when $n=3$. Moreover, $\gamma_{rI}(K_{1,n-1})=n\geq(n+3)/2$ for $n\geq3$. Hence, we may assume that diam$(T)\geq3$. If diam$(T)=3$, then $T$ is isomorphic to a double star $S_{a,b}$ with $1\leq a\leq b$ where $(a,b)\neq(2,2)$. Then, it is easy to check that $\gamma_{rI}(S_{a,b})\geq(n+3)/2$. So, in what follows we may assume that diam$(T)\geq4$, which implies that $n\geq5$.

Suppose that $\gamma_{rI}(T')\geq(n'+3)/2$, for each tree $T'\neq S_{2,2}$ of order $3\leq n'<n$. Let $T\neq S_{2,2}$ be a tree of order $n$. Let $r$ and $v$ be two leaves of $T$ with $d(r,v)=\mbox{diam}(T)$. We root the tree $T$ at $r$. Let $u$ be the parent of $v$, and $w$ be the parent of $u$. Let $f=(V_{0},V_{1},V_{2})$ be a $\gamma_{rI}(T)$-function. From now on, we assume that $V(S_{2,2})=\{a,b,a_{1},a_{2},b_{1},b_{2}\}$ in which $a$ and $b$ are the support vertices, $a_{1}$ and $a_{2}$ are the leaves adjacent to $a$, and $b_{1}$ and $b_{2}$ are the leaves adjacent to $b$. For a vertex $x$ of $T$, by $T_{x}$ we mean the subtree of T rooted at $x$ consisting of $x$ and all its descendants in $T$. We now distinguish two cases depending on $f(u)$.\vspace{0.75mm}

\textit{Case 1.} $f(u)\geq1$. If $T'=T-v$ is isomorphic to the double star $S_{2,2}$, then $T$ is obtained from $S_{2,2}$ by joining a new vertex to a leaf of it. Consequently, $\gamma_{rI}(T)=5=(n+3)/2$. Therefore, we assume that $T'\neq S_{2,2}$. Moreover, $n(T')>3$ since diam$(T)\geq4$. On the other hand, $f(v)=1$ since $f(u)\geq1$. This shows that $f'=f\mid_{V(T')}$ is an RID function of $T'$. Using the induction hypothesis we have
\begin{equation}\label{EQU1}
\frac{n+2}{2}=\frac{n(T')+3}{2}\leq \gamma_{rI}(T')\leq \omega(f')=\gamma_{rI}(T)-1,
\end{equation}
which implies the lower bound.\vspace{0.75mm}

\textit{Case 2.} $f(u)=0$. Since $u$ is not an isolated vertex of $T[V_{0}]$, it follows that $f(w)=0$ and that $u$ is adjacent to at least one leaf $v'$ different form $v$ with $f(v')=1$ if $f(v)=1$, or $v$ is the only leaf adjacent to $u$ if $f(v)=2$. We now consider two other cases.\vspace{0.75mm}

\textit{Subcase 2.1.} Suppose that $N_{T}(w)\cap(V_{0}\setminus\{u\})=\emptyset$. Let $T''=T-V(T_{w})$. Suppose first that $T''=S_{2,2}$. Without loss of generality, we may assume that $w$ is adjacent to $b$ or $b_{2}$.\vspace{0.75mm}

\textit{Subcase 2.1.1.} $wb_{2}\in E(T)$. If deg$(w)=2$, then it is easy to see that $f(b_{2})=2$ and hence $\gamma_{rI}(T)\geq5+\ell_{u}\geq(n+3)/2$ in which $\ell_{u}$ is the number of leaves adjacent to $u$. So, let deg$(w)\geq3$. Notice that since $d(r,v)=$ diam$(T)$, all descendants of $w$ are leaves or support vertices. Moreover, all descendants of $w$ different from $u$ are assigned at least $1$ under $f$. Suppose that $p$ and $q$ are the number of children and grandchildren of $w$, respectively. We now have $n=n(S_{2,2})+n(T_{w})=p+q+7$. Furthermore, $\gamma_{rI}(T)\geq n-4$ if $\ell_{u}\geq2$, and $\gamma_{rI}(T)\geq n-3$ if $\ell_{u}=1$. In both cases, we have $\gamma_{rI}(T)\geq(n+3)/2$.\vspace{0.75mm}

\textit{Subcase 2.1.2.} $wb\in E(T)$. Suppose that deg$(w)=2$. In such a situation, we observe that $\gamma_{rI}(T)=n-2\geq(n+3)/2$ by assigning $0$ to the vertices $a$ and $b$, and $1$ to the other vertices. On the other hand, since $f(u)=f(w)=0$, it follows that $(f(a),f(b))=(1,2)$. Therefore, $\gamma_{rI}(T)=\omega(f)\geq n-1$ which is a contradiction. Thus, deg$(w)\geq3$. Note that the assignment $g(a)=g(b)=g(w)=0$ and $g(x)=1$ for any other vertex $x$ defines an RID function of $T$. So, $\gamma_{rI}(T)\leq n-3$. But the condition $N_{T}(w)\cap(V_{0}\setminus\{u\})=\emptyset$ implies that all descendants of $w$ different from $u$, as well as the vertices $a$ and $b$, are assigned at least $1$ under $f$. Therefore, $\gamma_{rI}(T)\geq n-2$. This is a contradiction. Therefore, $w$ is not adjacent to $b$.\vspace{0.75mm}

So, we assume that $T''\neq S_{2,2}$. On the other hand, diam$(T)\geq4$ implies that $n(T'')\geq2$. Let $n(T'')=2$. It is easy to observe that $\gamma_{rI}(T)\geq \ell_{u}+3\geq(n+3)/2$ when deg$(w)=2$. When deg$(w)\geq3$, we have $\gamma_{rI}(T)=n-1$ if $\ell_{u}=1$, and $\gamma_{rI}(T)=n-2$ if $\ell_{u}\geq2$. In both cases, we end up with $\gamma_{rI}(T)\geq(n+3)/2$. So, we may assume that $T''\neq S_{2,2}$ and $n(T'')\geq3$. Then, $n(T'')=n-p-q-1$. Furthermore, $f''=f\mid_{V(T'')}$ is an RID function of $T''$. We consider two cases depending on $\ell_{u}$.\vspace{0.75mm}

\textit{Subcase 2.1.3.} $\ell_{u}=1$. Then, $\omega(f'')=\gamma_{rI}(T)-p-q$. So, we get
\begin{equation}\label{EQU2}
\frac{n-p-q-1+3}{2}=\frac{n(T'')+3}{2}\leq \gamma_{rI}(T'')\leq \omega(f'')=\gamma_{rI}(T)-p-q.
\end{equation}
Therefore, $\gamma_{rI}(T)\geq(n+p+q+2)/2>(n+3)/2$.\vspace{0.75mm}

\textit{Subcase 2.1.4.} $\ell_{u}\geq2$. We have $\omega(f'')=\gamma_{rI}(T)-p-q+1$. Therefore,
\begin{equation}\label{EQU3}
\frac{n-p-q-1+3}{2}=\frac{n(T'')+3}{2}\leq \gamma_{rI}(T'')\leq \omega(f'')=\gamma_{rI}(T)-p-q+1.
\end{equation}
Consequently, $\gamma_{rI}(T)\geq(n+p+q)/2\geq(n+3)/2$.\vspace{0.75mm}

\textit{Subcase 2.2.} Suppose that $N_{T}(w)\cap(V_{0}\setminus\{u\})\neq\emptyset$. We set $T'''=T-V(T_{u})$. Since diam$(T)\geq4$, it follows that $n(T''')\geq3$. If $T'''=S_{2,2}$, then we may assume that $u$ is adjacent to $b_{2}$ or $b$. Assume that $ub_{2}\in E(T)$, that is, $w=b_{2}$. In such a situation, the condition $N_{T}(w)\cap(V_{0}\setminus\{u\})\neq\emptyset$ implies that $f(u)=f(v)=f(w)=0$, a contradiction. Thus, $ub\in E(T)$. We have $\gamma_{rI}(T)\geq \ell_{u}+4$ if $\ell_{u}\geq2$, and $\gamma_{rI}(T)=6$ if $\ell_{u}=1$. In both cases it results in $\gamma_{rI}(T)\geq(n+3)/2$. So, let $T'''\neq S_{2,2}$. We have $n(T''')=n-\ell_{u}-1$ and that $f'''=f\mid_{V(T''')}$ is an RID function with weight at most $\gamma_{rI}(T)-\ell_{u}$. Therefore,
\begin{equation*}
\frac{n-\ell_{u}-1+3}{2}=\frac{n(T'')+3}{2}\leq \gamma_{rI}(T''')\leq \omega(f''')\leq \gamma_{rI}(T)-\ell_{u}.
\end{equation*}
So, $\gamma_{rI}(T)\geq(n+\ell_{u}+2)\geq(n+3)/2$.

All in all, we have proved the desired lower bound.
\end{proof}

In what follows we characterize all extremal trees for the lower bound given in Theorem \ref{Tree}. For this purpose, we introduce the family $\mathcal{J}$ of trees depicted in Figure \ref{fig1}.

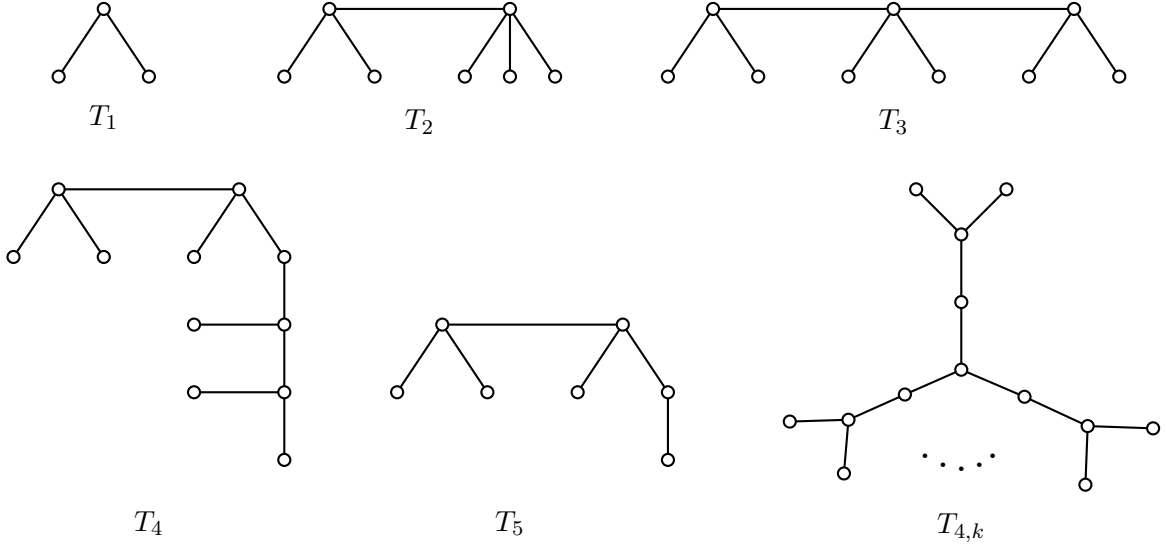
\begin{figure}[h]\vspace{-18mm}
\tikzstyle{every node}=[circle, draw, fill=white!, inner sep=0pt,minimum width=.16cm]
\begin{center}
\begin{tikzpicture}[thick,scale=.6]
  \draw(0,0) {

+(-13,-5) node{}
+(-11,-5) node{}
+(-12,-3.5) node{}
+(-13,-5) --+(-12,-3.5) --+(-11,-5)

+(-7,-3.5) node{}
+(-3,-3.5) node{}
+(-8,-5) node{}
+(-6,-5) node{}
+(-4,-5) node{}
+(-3,-5) node{}
+(-2,-5) node{}
+(-8,-5) --+(-7,-3.5) --+(-6,-5)
+(-4,-5) --+(-3,-3.5) --+(-2,-5)
+(-7,-3.5) --+(-3,-3.5) --+(-3,-5)

+(5.5,-3.5) node{}
+(9.5,-3.5) node{}
+(4.5,-5) node{}
+(6.5,-5) node{}
+(8.5,-5) node{}
+(10.5,-5) node{}
+(1.5,-3.5) node{}
+(0.5,-5) node{}
+(2.5,-5) node{}
+(1.5,-3.5) --+(5.5,-3.5) --+(9.5,-3.5)
+(0.5,-5) --+(1.5,-3.5) --+(2.5,-5)
+(4.5,-5) --+(5.5,-3.5) --+(6.5,-5)
+(8.5,-5) --+(9.5,-3.5) --+(10.5,-5)

+(-4.5,-10.5) node{}
+(-0.5,-10.5) node{}
+(-5.5,-12) node{}
+(-3.5,-12) node{}
+(-1.5,-12) node{}
+(0.5,-12) node{}
+(0.5,-13.5) node{}
+(-5.5,-12) --+(-4.5,-10.5) --+(-3.5,-12)
+(-1.5,-12) --+(-0.5,-10.5) --+(0.5,-12) --+(0.5,-13.5)
+(-4.5,-10.5) --+(-0.5,-10.5)

+(-13,-7.5) node{}
+(-9,-7.5) node{}
+(-14,-9) node{}
+(-12,-9) node{}
+(-10,-9) node{}
+(-8,-9) node{}
+(-8,-10.5) node{}
+(-10,-10.5) node{}
+(-8,-12) node{}
+(-8,-13.5) node{}
+(-10,-12) node{}
+(-14,-9) --+(-13,-7.5) --+(-12,-9)
+(-10,-9) --+(-9,-7.5) --+(-8,-9) --+(-8,-10.5)
+(-13,-7.5) --+(-9,-7.5)
+(-8,-10.5) --+(-8,-12) --+(-8,-13.5)
+(-8,-12) --+(-10,-12)
+(-8,-10.5) --+(-10,-10.5)

+(6,-7.5) node{}
+(8,-7.5) node{}
+(7,-8.5) node{}
+(7,-10) node{}
+(7,-11.5) node{}
+(6,-7.5) --+(7,-8.5) --+(8,-7.5)
+(7,-8.5) --+(7,-10) --+(7,-11.5)

+(5.75,-12.05) node{}
+(4.5,-12.61) node{}
+(3.2,-12.65) node{}
+(4.4,-13.8) node{}
+(7,-11.5) --+(5.75,-12.05) --+(4.5,-12.61)
+(3.2,-12.65) --+(4.5,-12.61) --+(4.4,-13.8)

+(8.4,-12.1) node{}
+(9.8,-12.75) node{}
+(9.75,-14.05) node{}
+(11.25,-12.8) node{}
+(7,-11.5) --+(8.4,-12.1) --+(9.8,-12.75)
+(9.75,-14.05) --+(9.8,-12.75) --+(11.25,-12.8)

+(7,-13.7) node[rectangle, draw=white!0, fill=white!100]{$\textbf{.}$}
+(6.6,-13.6) node[rectangle, draw=white!0, fill=white!100]{$\textbf{.}$}
+(7.4,-13.6) node[rectangle, draw=white!0, fill=white!100]{$\textbf{.}$}
+(7.7,-13.4) node[rectangle, draw=white!0, fill=white!100]{$\textbf{.}$}
+(6.2,-13.4) node[rectangle, draw=white!0, fill=white!100]{$\textbf{.}$}

+(7,-15) node[rectangle, draw=white!0, fill=white!100]{$T_{4,k}$}
+(-11,-14.9) node[rectangle, draw=white!0, fill=white!100]{$T_{4}$}
+(-3,-14.9) node[rectangle, draw=white!0, fill=white!100]{$T_{5}$}
+(5.5,-6) node[rectangle, draw=white!0, fill=white!100]{$T_{3}$}
+(-5,-6) node[rectangle, draw=white!0, fill=white!100]{$T_{2}$}
+(-12,-5.9) node[rectangle, draw=white!0, fill=white!100]{$T_{1}$}

};
\end{tikzpicture}
\end{center}\vspace{-5mm}
\caption{Family $\mathcal{J}$ of all trees $T$ with $\gamma_{rI}(T)=(|V(T)|+3)/2$. Note that $T_{4,k}$ is obtained from $k\geq1$ copies of $K_{1,3}$ by joining a new vertex to a leaf of any of them.}\label{fig1}
\end{figure}

\begin{theorem}\label{Chara}
For any tree $T$, $\gamma_{rI}(T)=(|V(T)|+3)/2$ if and only if $T\in \mathcal{J}$.
\end{theorem}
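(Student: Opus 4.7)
The plan is to prove the two directions separately. For sufficiency, I would verify for each member of $\mathcal{J}$ that it admits an RID function of weight exactly $(n+3)/2$; combined with Theorem \ref{Tree}, this yields equality. For $T_{1}=P_{3}$, assigning $1$ to all three vertices works. For $T_{4,k}$ (order $n=4k+1$), I would assign $2$ to the attachment vertex, $0$ to each $K_{1,3}$-center and to the leaf of each $K_{1,3}$ adjacent to the attachment vertex, and $1$ to the remaining leaves, yielding weight $2+(2k)=2k+2=(n+3)/2$. Analogous explicit assignments handle $T_{2},T_{3},T_{4},T_{5}$.

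For necessity, I would argue by strong induction on $n$, paralleling the case analysis of the proof of Theorem \ref{Tree}. Assume $T\neq S_{2,2}$ is a tree of order $n\geq3$ with $\gamma_{rI}(T)=(n+3)/2$. The small-diameter cases $\mathrm{diam}(T)\le3$ give $P_{3}=T_{1}$, $K_{1,3}$ (which appears as $T_{4,1}$ after suitable identification), and the double stars $S_{a,b}$ with $(a,b)\ne(2,2)$ that meet the equality, each of which is either in $\mathcal{J}$ or ruled out by direct computation. For $\mathrm{diam}(T)\geq4$, fix a $\gamma_{rI}(T)$-function $f$ and a diametral path, and examine where equality can hold in the inductive steps.

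In Case 1 ($f(u)\ge1$), equality in \eqref{EQU1} forces $T'=T-v\in\mathcal{J}$, so I would enumerate all ways of attaching a new leaf $v$ to a vertex $u$ of a member $T'\in\mathcal{J}$ in such a manner that some $\gamma_{rI}(T')$-function assigns $u$ a positive value; checking each member of $\mathcal{J}$, the resulting extensions must themselves lie in $\mathcal{J}$, and any that do not contradict the assumed equality. In Subcases 2.1.3 and 2.1.4, equality in \eqref{EQU2} or \eqref{EQU3} forces $p+q$ to be as small as possible (namely $p+q=2$, giving Subcase 2.1.4 with $p=1,q=1$, or $p+q=3$ in Subcase 2.1.3), together with $T''\in\mathcal{J}$; combining these restrictions pins down the local structure at $w$ and identifies the resulting tree as one of $T_{2},T_{3},T_{4},T_{5},T_{4,k}$. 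Similarly, in Subcase 2.2, equality forces $\ell_{u}=1$ and $T'''\in\mathcal{J}$, and the attaching leaf configuration at $u$ must match a specific neighborhood of a vertex in one of $T_{1},\dots,T_{5},T_{4,k}$.

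The main obstacle is the bookkeeping: each member of $\mathcal{J}$ must be accounted for by exactly the inductive reductions that produce extremal trees, and conversely, every extremal configuration coming out of the case analysis must be matched against Figure \ref{fig1}. To keep this manageable, I would build a table indexed by members of $\mathcal{J}$ recording, for each diametral leaf $v$ and its support $u$, which of Cases 1, 2.1.3, 2.1.4, 2.2 applies and what the reduced tree is. This table simultaneously shows closure of $\mathcal{J}$ under the relevant reductions (proving sufficiency is consistent) and exhausts the extremal possibilities in the inductive step (proving necessity). A subtle point to watch for is the exceptional behavior around $S_{2,2}$: every step in the Theorem \ref{Tree} proof that removed a subtree had to exclude $T'=S_{2,2}$, and each such exclusion either produces a concrete small extremal tree belonging to $\mathcal{J}$ or contradicts equality, so these exceptional cases must be handled one by one at the outset of the induction.
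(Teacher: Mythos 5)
Your overall strategy --- verify the members of $\mathcal{J}$ directly for sufficiency, then rerun the induction of Theorem \ref{Tree} and track where equality can occur --- is exactly the paper's, and your sufficiency direction (including the explicit RID function for $T_{4,k}$) is fine. The problem is that your accounting of \emph{which} branches of the induction can be tight is wrong in three places, and in each case the error would send you looking for extremal trees where none exist while obscuring the actual source of the family $\mathcal{J}$. First, in Case 1 with $T'=T-v\neq S_{2,2}$, the chain (\ref{EQU1}) reads $(n+2)/2\leq\gamma_{rI}(T)-1$, i.e.\ $\gamma_{rI}(T)\geq(n+4)/2>(n+3)/2$: this is an outright contradiction, not an equality condition, so there is nothing to enumerate; the only survivor of Case 1 is the exceptional branch $T'=S_{2,2}$, which yields $T_5$. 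Second, in Subcase 2.1.3 ($\ell_u=1$) one always has $p,q\geq1$, hence $p+q\geq2$, and (\ref{EQU2}) gives $\gamma_{rI}(T)\geq(n+4)/2$, again strict; equality is possible only in Subcase 2.1.4 ($\ell_u\geq2$), where (\ref{EQU3}) forces $p+q=3$, and since $q\geq\ell_u\geq2$ this means $p=1$ and $q=2$ (so $\deg(w)=2$ and $\ell_u=2$). Your proposed tight configuration ``$p+q=2$ with $p=q=1$'' is impossible in that subcase, and ``$p+q=3$ in Subcase 2.1.3'' never attains equality. Third, in Subcase 2.2 with $T'''\neq S_{2,2}$, both the $\ell_u=1$ and the $\ell_u\geq2$ computations give strictly more than $(n+3)/2$; equality does \emph{not} force $\ell_u=1$ and $T'''\in\mathcal{J}$ --- that whole branch is vacuous, and the tree $T_3$ arises only from the exceptional sub-branch $T'''=S_{2,2}$.

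The corrected picture, which your table would need to reflect, is that almost all of $\mathcal{J}$ comes from the exceptional small-subtree branches ($T'$, $T''$ or $T'''$ equal to $S_{2,2}$, or $n(T'')=2$), while the single genuinely tight inductive branch is Subcase 2.1.4; there one still must argue, as the paper does, that $T''$ cannot be one of $T_1,\dots,T_5$ and that $w$ must attach to the special vertex $z$ of $T_{4,k}$, which is what generates the infinite family $T_{4,k+1}$. A smaller slip: $K_{1,3}$ is not extremal ($\gamma_{rI}(K_{1,3})=4\neq 7/2$, and $(n+3)/2$ is not even an integer there), so it should not appear among your small-diameter cases; the diameter-at-most-three extremal trees are $T_1$, $T_2$ and $T_{4,1}$.
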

\begin{proof}
It is easy to check that $\gamma_{rI}(T_{i})=(|V(T_{i})|+3)/2$ for each $1\leq i\leq5$. Now consider the tree $T_{4,k}$ for some $k\geq1$. It is obtained from $k\geq1$ copies of the star $H_{i}=K_{1,3}$ on set of vertices $\{u_{i},v_{i},w_{i},x_{i}\}$ with central vertex $u_{i}$ by adding a new vertex $z$ and joining it to $x_{i}$, for all $1\leq i\leq k$. It is not difficult to see that $(f(u_{i}),f(v_{i}),f(w_{i}),f(x_{i}))=(0,1,1,0)$ for all $1\leq i\leq k$, and $f(z)=2$ defines a $\gamma_{rI}(T_{4,k})$-function with weight $2k+2=(|V(T_{4,k})|+3)/2$.

Conversely, let $\gamma_{rI}(T)=(|V(T)|+3)/2$. This implies that $n=|V(T)|\geq3$ and that $T\neq S_{2,2}$. We proceed by induction on the order $n\geq3$ of $T\neq S_{2,2}$. Clearly, $T=T_{1}=P_{3}\in \mathcal{J}$ when $n=3$. Moreover, it is readily checked that $T\in\{T_{1},T_{2},T_{4,1}\}$ when diam$(T)\leq3$. Hence, in what follows we may assume that diam$(T)\geq4$, which implies that $n\geq5$.

Let $T'\in \mathcal{J}$ for any tree $T'\neq S_{2,2}$ of order $3\leq n'<n$ for which $\gamma_{rI}(T')=(n'+3)/2$. Suppose now that $T\neq S_{2,2}$ is a tree of order $n$ for which $\gamma_{rI}(T)=(n+3)/2$. From now on, we make use of the notations given in the proof of Theorem \ref{Tree}. Again, we consider two cases depending on $f(u)$.\vspace{0.75mm}

\textit{Case 1.} $f(u)\geq1$. If $T'=T-v=S_{2,2}$, we have $T=T_{5}\in \mathcal{J}$. So, we assume that $T'\neq S_{2,2}$. On the other hand, $n(T')>3$ since diam$(T)\geq4$. In such a situation, the inequality chain (\ref{EQU1}) contradicts the fact that $\gamma_{rI}(T)=(n+3)/2$.\vspace{0.75mm}

\textit{Case 2.} $f(u)=0$. We have $f(w)=0$ as it was already mentioned in the proof of Theorem \ref{Tree}. Following the possibilities in the proof of Theorem \ref{Tree} we have two more cases.\vspace{0.75mm}

\textit{Subcase 2.1.} $N_{T}(w)\cap(V_{0}\setminus\{u\})=\emptyset$. Suppose first that $T''=T-V(T_{w})=S_{2,2}$. Similar to Subcase $2.1.2$ in the proof of Theorem \ref{Tree}, $wb\notin E(T)$ and we may assume that $wb_{2}\in E(T)$. Let deg$(w)=2$. If $\ell_{u}\geq2$, then $\gamma_{rI}(T)=5+\ell_{u}>(\ell_{u}+11)/2=(n+3)/2$. This is a contradiction. If $\ell_{u}=1$, then $\gamma_{rI}(T)=7>6=(n+3)/2$ which is again a contradiction. Therefore, deg$(w)\geq3$. If $\ell_{u}=1$, then $\gamma_{rI}(T)=\omega(f)\geq n-3>(n+3)/2$, a contradiction. Therefore, $\ell_{u}\geq2$. We then have $\gamma_{rI}(T)=\omega(f)=n-4\geq(n+3)/2$ with equality if and only if $n=11$. In such a situation $\ell_{u}=2$ and deg$(w)=3$, necessarily. Therefore, $T=T_{4}\in \mathcal{J}$.

We now consider the situation in which $n(T'')=2$. We first assume that deg$(w)=2$. If $\ell_{u}=1$, then $T=P_{5}$ with $\gamma_{rI}(P_{5})=5>(n+3)/2$ which is impossible. So, $\ell_{u}\geq2$. Then, $\gamma_{rI}(T)=\ell_{u}+3>(n+3)/2$ which is again impossible. Therefore, deg$(w)\geq3$. We have $\gamma_{rI}(T)=\omega(f)=n-1>(n+3)/2$ if $\ell_{u}=1$. So, $\ell_{u}\geq2$. In such a situation, we have $\gamma_{rI}(T)=n-2\geq(n+3)/2$ with equality if and only if $\ell_{u}=2$ and deg$(w)=3$. Therefore, $T=T_{5}\in \mathcal{J}$. So, we turn our attention to the situation in which $T''\neq S_{2,2}$ and $n(T'')\geq3$. We consider the following two possibilities.\vspace{0.75mm}

\textit{Subcase 2.1.1.} $\ell_{u}=1$. In this case, the inequality chain (\ref{EQU2}) implies that $\gamma_{rI}(T)>(n+3)/2$. This is a contradiction.\vspace{0.75mm}

\textit{Subcase 2.1.2.} $\ell_{u}\geq2$. Here both inequalities in (\ref{EQU3}) hold with equality, necessarily. This shows that $p+q=3$ and that $\gamma_{rI}(T'')=(n(T'')+3)/2$. Therefore $\ell_{u}=2$, deg$(w)=2$, and $T''\in \mathcal{J}$ by the induction hypothesis. Note that $T_{w}$ is isomorphic to the star $K_{1,3}$. Moreover, it is not difficult to check that a tree $T$ obtained from a copy of $K_{1,3}$ and a copy of $T_{i}\in\{T_{1},\cdots,T_{5}\}$ by joining $w$ to any vertex of $T_{i}$ does not satisfy $\gamma_{rI}(T)=(|V(T)|+3)/2$. Therefore, $T''=T_{4,k}$ for some $k\geq1$. Furthermore, the vertex $w$ must be necessarily adjacent to the vertex $z$ of $T_{4,k}$ in order that $T$ satisfies $\gamma_{rI}(T)=(|V(T)|+3)/2$. It is now clear that $T=T_{4,k+1}\in \mathcal{J}$.\vspace{0.75mm}

\textit{Subcase 2.2.} $N_{T}(w)\cap(V_{0}\setminus\{u\})\neq\emptyset$. We now distinguish the following two cases.\vspace{0.75mm}

\textit{Subcase 2.2.1.} $T'''=T-V(T_{u})=S_{2,2}$ and $u$ is adjacent to $b=w$. We have $\gamma_{rI}(T)=6>(n+3)/2$ when $\ell_{u}=1$. When $\ell_{u}\geq2$, we have $\gamma_{rI}(T)=4+\ell_{u}\geq(n+3)/2$ with equality if and only if $\ell_{u}=2$. This shows $T=T_{3}\in \mathcal{J}$.\vspace{0.75mm}

\textit{Subcase 2.2.2.} $T'''=S_{2,2}$ and $u$ is adjacent to $b_{2}=w$. In such a situation, $f(u)=f(w)=f(b)=0$. This is a contradiction.\vspace{0.75mm}

\textit{Subcase 2.2.3.} $T'''\neq S_{2,2}$. Notice that if $\ell_{u}=1$, then $\omega(f''')=\gamma_{rI}(T)-2$. Therefore,
\begin{equation*}
\gamma_{rI}(T)-2=\omega(f''')\geq \gamma_{rI}(T''')\geq\frac{n(T''')+3}{2}=\frac{n+1}{2}.
\end{equation*}
Therefore $\gamma_{rI}(T)>(n+1)/2$, a contradiction. Finally, $\omega(f''')=\gamma_{rI}(T)-\ell_{u}$ when $\ell_{u}\geq2$. Therefore,
\begin{equation*}
\gamma_{rI}(T)-\ell_{u}=\omega(f''')\geq \gamma_{rI}(T''')\geq\frac{n(T''')+3}{2}=\frac{n-\ell_{u}+2}{2}.
\end{equation*}
This ends up with the final contradiction $\gamma_{rI}(T)\geq(n+\ell_{u}+2)/2>(n+3)/2$.

The above discussion guarantees that $T\in \mathcal{J}=\{T_{1},\cdots,T_{5}\}\cup\{T_{4,k}\}$ for some $k\geq1$. This completes the proof.
\end{proof}

Some relations between RD number and RID number can be established based on the inherent properties of their concepts. For instance, we have the following realizability result.

\begin{proposition}\label{Prop1}
For any connected graph $G$, $\gamma_{r}(G)\leq \gamma_{rI}(G)\leq2\gamma_{r}(G)$. Furthermore, an ordered pair $(a,b)$ is realizable as the RD number and RID number for some nontrivial trees if and only if $2\leq a\leq b\leq2a$ and $(a,b)\neq(2,3)$.
\end{proposition}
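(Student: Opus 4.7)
The plan has three stages. First, for the chain $\gamma_r(G)\le\gamma_{rI}(G)\le 2\gamma_r(G)$, given a $\gamma_{rI}(G)$-function $f=(V_0,V_1,V_2)$ I would observe that $S=V_1\cup V_2$ is a restrained dominating set: every $v\in V_0$ has $f(N(v))\ge 2$ and hence a neighbor in $V_1\cup V_2$, while $V\setminus S=V_0$ has no isolated vertex by the RID hypothesis. Therefore $\gamma_r(G)\le|V_1|+|V_2|\le\omega(f)=\gamma_{rI}(G)$. For the upper bound, given a $\gamma_r(G)$-set $S$, define $f$ by $f\equiv 2$ on $S$ and $f\equiv 0$ elsewhere; then $f$ is an ID function by the domination of $S$ and is RID by the restrained property of $S$, giving $\gamma_{rI}(G)\le 2|S|=2\gamma_r(G)$.

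For the necessity of the conditions on realizable pairs $(a,b)$ in nontrivial trees, the bounds $a\le b\le 2a$ are immediate from the first stage, and $a\ge 2$ holds because no single vertex can form a restrained dominating set of a nontrivial tree. To rule out $(a,b)=(2,3)$, I would first show that the only nontrivial trees with $\gamma_r(T)=2$ are $P_2$ and $P_4$: if $\{u,v\}$ is an RD set of $T$ and $w$ is a leaf of $T$, then $w\in\{u,v\}$, because otherwise the unique neighbor of $w$ must also lie outside $\{u,v\}$ to avoid isolating $w$ in $V\setminus\{u,v\}$, and then $w$ is not dominated. So $T$ has at most two leaves and is therefore a path, and a direct check shows $\gamma_r(P_n)=2$ only for $n\in\{2,4\}$. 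Computing $\gamma_{rI}(P_2)=2$ and $\gamma_{rI}(P_4)=4$ (using that every leaf has $f\ge 1$ in any RID function) then rules out $b=3$.

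For sufficiency I would exhibit an explicit tree for each admissible pair $(a,b)$. The star $K_{1,a-1}$ realizes $(a,a)$ for $a\ge 2$. For $a\ge 3$ and $a+1\le b\le 2a-2$, a broom-like tree $\mathcal{T}(p,q)$ with $p=2a-b$ pendant leaves and $q=b-a$ length-two arms $c-\ell_i-m_i$ attached to a central vertex $c$ realizes $(a,b)$. For $b=2a-1$ with $a\ge 3$, the tree $\mathcal{T}(1,a-1)$, equivalently the corona $K_{1,a-1}\circ K_1$, works. For $b=2a$ with $a\ge 2$, the spider with $a-1$ legs of length three $c-u_i-v_i-w_i$ realizes $(a,2a)$, while the remaining case $(2,2)$ is realized by $P_2$. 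In each family, $\gamma_r$ is computed by the standard leaf argument together with a short inspection, and the lower bound $\gamma_{rI}\ge b$ follows from a leg-by-leg analysis. The main obstacle is this last step, whose crux is a dichotomy: along any length-two (resp.\ length-three) arm, the leaf-and-support constraints force the arm to contribute at least $2$ to the total weight, with equality only under a heavy pattern $(0,2)$ (resp.\ $(0,0,2)$) which automatically places an $f$-zero neighbor adjacent to $c$, so that $c$ itself can lie in $V_0$ without violating non-isolation; one then verifies simultaneously across all arms that the configuration with every arm heavy and $c\in V_0$ is optimal, yielding weight exactly $b$.
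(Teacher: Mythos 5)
Your overall route is the same as the paper's: the inequality chain via $V_1\cup V_2$ being an RD set and the RID function $(V(G)\setminus S,\emptyset,S)$; the characterization $\gamma_r(T)=2\iff T\in\{P_2,P_4\}$ to exclude $(2,3)$ (your leaf argument is a clean, slightly different derivation of the same fact); and the same extremal trees --- your broom $\mathcal{T}(2a-b,b-a)$ is exactly the paper's tree built from $K_{1,a}$ by appending $b-a$ pendants, and your spider is the paper's twice-subdivided star. So the constructions and the necessity direction are fine.

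The gap is in your final "dichotomy" paragraph, specifically for the legs of length three. You claim the heavy pattern $(0,0,2)$ on an arm $c$--$u_i$--$v_i$--$w_i$ lets $c$ lie in $V_0$, and that the configuration with every arm heavy and $c\in V_0$ is optimal of weight $b$. This is false: if $f(u_i)=f(v_i)=0$ and $f(w_i)=2$, then the Italian condition at $u_i$ reads $f(c)+f(v_i)\geq 2$, which forces $f(c)=2$; so $c$ cannot be in $V_0$, and moreover your described configuration has weight $2(a-1)=2a-2\neq b$, so taken literally it would "prove" $\gamma_{rI}\leq 2a-2$ for the spider, which is wrong. The correct accounting is: each length-three leg contributes at least $2$, and a leg contributing exactly $2$ forces $f(c)=2$, so either $\omega(f)\geq 2(a-1)+2=2a$ or every leg contributes at least $3$ (which also gives $\geq 2a$ after handling $a=2$ separately). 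A similar wrinkle occurs in the broom at $b=2a-1$: with only $p=1$ pendant at $c$, the all-heavy configuration with $f(c)=0$ violates $f(N(c))\geq 2$, so your stated witness is not an RID function there either; you need a modified assignment (e.g.\ one arm receiving $(1,1)$, or the paper's function with a single $2$ on one arm tip and $1$ elsewhere). These are local, fixable errors, but as written the step that is supposed to deliver $\gamma_{rI}=b$ for the extremal families does not go through.
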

\begin{proof}
Let $f=(V_{0},V_{1},V_{2})$ be a $\gamma_{rI}(G)$-function. Clearly, $V_{1}\cup V_{2}$ is an RD set in $G$. Therefore, $\gamma_{r}(G)\leq|V_{1}|+|V_{2}|\leq|V_{1}|+2|V_{2}|=\gamma_{rI}(G)$. In order to verify the upper bound, it suffices to consider the RID function $(V(G)\setminus S,\emptyset,S)$ for any $\gamma_{r}(G)$-set $S$.

Suppose that $2\leq a\leq b\leq2a$ and $(a,b)\neq(2,3)$. Let $b=2a$. Let $T$ be obtained from the star $K_{1,a-1}$ with central vertex $u$ by subdividing each edge twice. Note that $L(T)\cup\{u\}$ is the unique RD set of $T$ in which $L(T)$ is the set of leaves of $T$. So, $\gamma_{r}(T)=a$. On the other hand, $f=(V_{0},V_{1},V_{2})=(V(T)\setminus(L(T)\cup\{u\}),\emptyset,L(T)\cup\{u\})$ is an RID function with weight $\gamma_{rI}(T)=b$.

If $a=b$, then the star $K_{1,a-1}$ satisfies that $\gamma_{r}(K_{1,a-1})=\gamma_{rI}(K_{1,a-1})=a$. So, we may assume that $a<b<2a$. If $a=2$, then $b=3$ which is impossible. Therefore, $a\geq3$. We begin with the star $K_{1,a}$ with $V(K_{1,a})=\{u,v_{1},\cdots,v_{a}\}$ in which $u$ is the central vertex. We add $b-a$ new vertices $w_{i}$ and edges $v_{i}w_{i}$ for $1\leq i\leq b-a$. Let $T'$ be the resulting tree. Clearly, the set of leaves $L(T')$ is the unique RD set in $T'$ of cardinality $\gamma_{r}(T')=a$. It is now easy to see that $f'=(V_{0},V_{1},V_{2})=(\{u,v_{1}\},V(T)\setminus\{u,v_{1},w_{1}\},\{w_{1}\})$ defines an RID function with weight $\omega(f')=\gamma_{rI}(T')=b$.

Conversely, suppose that $\gamma_{r}(T)=a$ and $\gamma_{rI}(T)=b$ for some nontrivial tree $T$. The bounds in the theorem imply that $a\leq b\leq2a$. Also, $a\geq2$ since $T$ is a nontrivial tree. Suppose now that $a=2$ and $T\neq P_{2}$. Let $S=\{x,y\}$ be a $\gamma_{r}(T)$-set. Since $T\neq P_{2}$, it follows that $V(T)\setminus S\neq \emptyset$. This shows that $T[V(T)\setminus S]$ is a forest consisting of $t\geq1$ nontrivial components $T_{1},\cdots,T_{t}$. Since $T$ is a tree, it follows that $|V(T_{i})|\leq2$ for all $1\leq i\leq t$. Therefore, any component $T_{i}$ is isomorphic to the path $P_{2}$. Moreover, any vertex of $T_{i}$ has precisely one neighbor in $S$ since $T$ is a tree. If $t\geq2$, then $T$ contains the cycle $C_{6}$ as a subgraph, a contradiction. Therefore, $t=1$. This implies that $T[V(T)\setminus S]\cong P_{2}$ and so, $T\cong P_{4}$. In fact, we have shown that $\gamma_{r}(T)=2$ if and only if $T\in\{P_{2},P_{4}\}$. Therefore, the case $(a,b)=(2,3)$ is impossible.
\end{proof}


\section{Graphs with small or large RID numbers}

\subsection{Graphs $G$ with $\gamma_{rI}(G)=i$ when $i\in\{2,3\}$}

Let $H$ be a complete bipartite graph of order $n\geq3$ with partite sets $X$ and $Y$ with $|X|\leq|Y|$ such that $|X|\in\{1,2\}$ and $|Y|\geq2$. Let $\Omega$ be the family of all graphs $G$ obtained from $H$ by adding some edges among the vertices in $Y$ such that $\delta(G[Y])\geq1$.

\begin{theorem}\label{Prop2}
For any connected graph $G$ of order $n\geq2$, $\gamma_{rI}(G)=2$ if and only if $G\in \Omega\cup\{P_{2}\}$.
\end{theorem}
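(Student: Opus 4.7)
The plan is to prove both directions separately via a short analysis of the possible structure of a weight-$2$ restrained Italian dominating function. First I would establish the floor $\gamma_{rI}(G)\geq 2$ for any connected $G$ of order $n\geq 2$: a function of weight $0$ cannot be an ID function since some vertex in $V_0$ has no neighbor of positive weight, and a function of weight $1$ places a single vertex $v$ in $V_1$ while every other vertex $u\in V_0$ needs $f(N(u))\geq 2$, impossible because $\sum_{V(G)} f=1$. Hence any $\gamma_{rI}(G)$-function with weight $2$ falls into exactly one of two shapes: either $V_2=\{v\}$ and $V_1=\emptyset$, or $V_1=\{u,v\}$ and $V_2=\emptyset$.

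For the sufficiency direction, I would simply exhibit weight-$2$ RID functions for each graph in $\Omega\cup\{P_2\}$. For $G=P_2$, the function $f\equiv 1$ has $V_0=\emptyset$ and the ID and restraint conditions are vacuous. For $G\in\Omega$ with $|X|=1$, say $X=\{v\}$, assigning $f(v)=2$ and $f\equiv 0$ on $Y$ is an ID function because $v$ dominates $Y$, and $\delta(G[Y])\geq 1$ ensures $G[V_0]=G[Y]$ has no isolated vertex. For $G\in\Omega$ with $|X|=2$, say $X=\{u,v\}$, setting $f(u)=f(v)=1$ and $f\equiv 0$ on $Y$ works because every $w\in Y$ is adjacent to both $u$ and $v$, so $f(N(w))\geq 2$, and again $\delta(G[Y])\geq 1$ takes care of the restraint.

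For the necessity direction, I would take a $\gamma_{rI}(G)$-function $f=(V_0,V_1,V_2)$ of weight $2$ and argue case by case. In the shape $V_2=\{v\}$, $V_1=\emptyset$: every vertex of $V_0=V(G)\setminus\{v\}$ needs a neighbor assigned $2$, forcing $v$ to be universal; the restraint condition then reads $\delta(G[V(G)\setminus\{v\}])\geq 1$, which in particular needs $n\geq 3$, and $G$ sits in $\Omega$ with $X=\{v\}$. In the shape $V_1=\{u,v\}$, $V_2=\emptyset$: if $V_0=\emptyset$ then $n=2$ and connectedness gives $G=P_2$; otherwise every $w\in V_0$ is adjacent to both $u$ and $v$ and the restraint condition gives $\delta(G[V_0])\geq 1$. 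Here I split on whether $uv\in E(G)$: if $uv\in E(G)$ then $u$ (and $v$) is universal in $G$, and $G\in\Omega$ via $X=\{u\}$ and $Y=V(G)\setminus\{u\}$, where $\delta(G[Y])\geq 1$ holds because $v\in Y$ is adjacent to every vertex of $V_0\subseteq Y$ and each $w\in V_0$ is adjacent to $v\in Y$; if $uv\notin E(G)$ then $X=\{u,v\}$ and $Y=V_0$ directly satisfy the definition of $\Omega$.

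The only delicate point is the bookkeeping in the second shape: one must remember that the definition of $\Omega$ forbids edges inside $X$, so a weight-$2$ function with $V_1=\{u,v\}$ and $uv\in E(G)$ has to be recognized as an instance of $\Omega$ with $|X|=1$ rather than $|X|=2$. Beyond that small subtlety, and the isolated handling of the $n=2$ case through $P_2$, the argument is a straightforward enumeration of the two possible profiles of a weight-$2$ RID function.
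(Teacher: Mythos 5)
Your proposal is correct and follows essentially the same route as the paper: both split a weight-$2$ RID function into the profiles $(|V_1|,|V_2|)=(0,1)$ and $(2,0)$, and in the latter case distinguish $uv\in E(G)$ (yielding $X=\{u\}$) from $uv\notin E(G)$ (yielding $X=\{u,v\}$). Your explicit lower bound $\gamma_{rI}(G)\geq 2$ and the separate treatment of the $n=2$ case are slightly more careful than the paper's ``routine to check,'' but they do not change the argument.
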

\begin{proof}
It is routine to check that $\gamma_{rI}(G)=2$ if $G\in \Omega\cup\{P_{2}\}$.

Conversely, suppose that $\gamma_{rI}(G)=2$ and $G\neq P_{2}$. Let $f=(V_{0},V_{1},V_{2})$ be a $\gamma_{rI}(G)$-function. We consider two cases depending on $V_{2}$.\vspace{0.75mm}

\textit{Case 1.} $V_{2}\neq\emptyset$. In such a case, there is a unique vertex $v$ with $f(v)=2$, and the other $n-1\geq2$ vertices are assigned $0$ under $f$. Moreover, all vertices in $V(G)\setminus\{v\}$ are adjacent to $v$ and there is no isolated vertex in $G[V(G)\setminus\{v\}]$ by the definition of $f$. We observe that $G\in \Omega$ by taking $X$ and $Y$ as $\{v\}$ and $V(G)\setminus\{v\}$, respectively.\vspace{0.75mm}

\textit{Case 2.} $V_{2}=\emptyset$. So, there are two vertices $u$ and $v$ with $f(u)=f(v)=1$, and the other vertices are assigned $0$ under $f$. We consider two other possibilities.\vspace{0.75mm}

\textit{Subcase 2.1.} Let $uv\notin E(G)$. We take $X$ and $Y$ as $\{u,v\}$ and $V(G)\setminus\{u,v\}$, respectively. We have $|Y|\geq2$ since $G$ cannot be $P_{3}$. We now deduce that $G\in \Omega$ from the fact that each vertex in $Y$ is adjacent to both vertices in $X$ and another vertex in $Y$.\vspace{0.75mm}

\textit{Subcase 2.2.} Let $uv\in E(G)$. In such a case, setting $X=\{u\}$ and $Y=V(G)\setminus\{u\}$ satisfies $G\in \Omega$. This completes the proof.
\end{proof}

Let $\Psi$ consist of all graphs $G$ satisfying one of the following statements $(i)$ and $(ii)$.\vspace{0.75mm}

$(i)$ $\Delta(G)=n-1$ and $G$ has a unique vertex of degree one.\vspace{0.75mm}

$(ii)$ $G$ is obtained from a graph $H$ with $\delta(H)\geq1$ by adding two vertices $x$ and $y$ and adding edges with one end point in $\{x,y\}$ and the other in $V(H)$ such that $N_{G}(x)=V(H)$ and that $1\leq$ deg$(y)\leq|V(H)|-1$.\vspace{0.75mm}

Finally, suppose that $H$ and $K$ be two graphs with $|V(H)|=3$ and $\delta(K)\geq1$. Then, $G$ is obtained from joining each vertex of $K$ to at least two vertices of $H$ so that the resulting graph is connected. Let $\Theta$ be the family of all resulting graphs $G$.

\begin{theorem}
Let $G$ be a connected graph. Then, $\gamma_{rI}(G)=3$ if and only if $G\in \Psi \cup(\Theta\setminus \Omega)\cup\{P_{3}\}$.
\end{theorem}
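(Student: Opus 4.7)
The plan is to prove both directions of the equivalence, with the forward direction consisting of a construction-and-verification for each of the three families, and the backward direction relying on a case analysis on the structure of a $\gamma_{rI}(G)$-function of weight $3$. Throughout, Theorem \ref{Prop2} is invoked whenever we need to rule out $\gamma_{rI}(G)=2$.

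For the forward direction, the case $G=P_{3}$ is handled by direct inspection. For $G\in\Psi$ of type (i), I would let $v$ be the vertex of degree $n-1$ and $u$ the unique pendant, and take $f(v)=2$, $f(u)=1$, $f\equiv 0$ elsewhere; this is an RID function of weight $3$ since $v$ dominates $V_{0}$ and every vertex of $V_{0}$ has degree at least $2$ and hence a neighbor in $V_{0}$, while the presence of a pendant rules out $G\in\Omega\cup\{P_{2}\}$. For $G\in\Psi$ of type (ii), the function $f(x)=2$, $f(y)=1$, $f\equiv 0$ on $V(H)$ is RID because $N_{G}(x)=V(H)$ dominates $V_{0}=V(H)$ and $\delta(H)\geq 1$ removes isolated vertices in $G[V_{0}]$. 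For $G\in\Theta\setminus\Omega$, assigning weight $1$ to each of the three vertices of $H$ and $0$ elsewhere gives an RID function of weight $3$, since each vertex of $K$ is adjacent to at least two weight-$1$ vertices and $\delta(K)\geq 1$; the exclusion from $\Omega$ combined with Theorem \ref{Prop2} yields $\gamma_{rI}(G)\geq 3$.

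For the backward direction, let $f=(V_{0},V_{1},V_{2})$ be a $\gamma_{rI}(G)$-function with $\omega(f)=3$ and suppose $G\neq P_{3}$. The weight constraint permits exactly one of two configurations: either $V_{2}=\{v\}$ and $V_{1}=\{u\}$, or $V_{2}=\emptyset$ and $|V_{1}|=3$. In the first configuration, each $w\in V_{0}$ must satisfy $v\in N(w)$, because $v$ is the only weight-$2$ vertex and the sole weight-$1$ vertex alone cannot supply $f(N(w))\geq 2$. If $uv\in E(G)$, then $\deg(v)=n-1$; the function obtained by resetting $f(u)=0$ would be an RID function of weight $2$ unless $u$ has no neighbor in $V_{0}$, so avoiding a contradiction with $\gamma_{rI}(G)=3$ forces $u$ to be a pendant adjacent only to $v$, and then all remaining vertices have degree $\geq 2$, placing $G$ in $\Psi$ of type (i). If $uv\notin E(G)$, set $H:=G[V_{0}]$; then $\delta(H)\geq 1$ and $N_{G}(v)=V(H)$, and the role of $(x,y)$ is played by $(v,u)$. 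Should $\deg(u)=|V(H)|$, the function $(V_{0},\{u,v\},\emptyset)$ would be RID of weight $2$, a contradiction; thus $1\leq \deg(u)\leq|V(H)|-1$ and $G\in\Psi$ of type (ii). In the second configuration, every $w\in V_{0}$ is adjacent to at least two of the three vertices of $V_{1}$, and the no-isolated condition on $G[V_{0}]$ forces $V_{0}=\emptyset$ or $|V_{0}|\geq 2$; the case $V_{0}=\emptyset$ gives $n=3$, yielding either the excluded $P_{3}$ or $K_{3}$, the latter having $\gamma_{rI}=2$. Hence $|V_{0}|\geq 2$, and with $H:=G[V_{1}]$ and $K:=G[V_{0}]$ the data $|V(H)|=3$, $\delta(K)\geq 1$, and ``each vertex of $K$ adjacent to at least two vertices of $H$'' realize $G\in\Theta$; finally $\gamma_{rI}(G)=3>2$ combined with Theorem \ref{Prop2} gives $G\notin\Omega$, so $G\in\Theta\setminus\Omega$.

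The main obstacle will be the weight-$2$ escape arguments in the first configuration: one must verify that each natural rewrite of $f$ (namely, dropping $u$'s weight, or redistributing it to form $(V_{0},\{u,v\},\emptyset)$) actually produces a genuine RID function whenever the targeted structural hypothesis fails, so that $\gamma_{rI}(G)=3$ truly pins $G$ into the claimed branch of $\Psi$. A secondary care is the forward-direction check that the exhibited weight-$3$ functions are not inadvertently improved by some other assignment to weight $2$, which is where Theorem \ref{Prop2} is essential.
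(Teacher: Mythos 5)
Your proposal is correct and follows essentially the same route as the paper: the forward direction exhibits the same weight-$3$ functions for $\Psi$ and $\Theta\setminus\Omega$ (invoking Theorem \ref{Prop2} to exclude weight $2$), and the backward direction splits a weight-$3$ $\gamma_{rI}(G)$-function into the cases $(|V_1|,|V_2|)=(1,1)$ and $(3,0)$, recovering types (i), (ii) and $\Theta\setminus\Omega$ exactly as the paper does. If anything you are slightly more explicit than the paper on two minor points, namely checking $\deg(u)\leq|V(H)|-1$ in the type-(ii) branch and disposing of $K_3$ when $V_0=\emptyset$.
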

\begin{proof}
Let $G\in \Psi$. Assigning $2$ to the vertex of maximum degree $n-1$, $1$ to the unique vertex of degree one and $0$ to the other vertices defines an RID function with weight $\gamma_{rI}(G)=3$ when $G$ satisfies $(i)$. Let $G$ satisfy $(ii)$. Then, $(f(x),f(y))=(2,1)$ and $f(v)=0$ for the other vertices is an RID function with weight $\gamma_{rI}(G)=3$. Let $G\in \Theta\setminus \Omega$. The assignment $g(u)=1$ for each $u\in V(H)$, and $g(v)=0$ for each $v\in V(K)$ is an RID function with weight $3$. So, $\gamma_{rI}(G)\leq3$. Moreover, $\gamma_{rI}(G)>2$ since $G\notin \Omega$. Therefore, $\gamma_{rI}(G)=3$.

Conversely, let $f=(V_{0},V_{1},V_{2})$ be a $\gamma_{rI}(G)$-function. We deal with two cases depending on the equality $\gamma_{rI}(G)=|V_{1}|+2|V_{2}|=3$.\vspace{0.75mm}

\textit{Case 1.} $(|V_{1}|,|V_{2}|)=(1,1)$. Let $V_{1}=\{y\}$ and $V_{2}=\{x\}$. Since $V(G)\setminus\{x,y\}=V_{0}$, every vertex in this subset has a neighbor in $\{x,y\}$. Moreover, each such a vertex is adjacent to $x$, necessarily. Therefore, deg$(x)\geq n-2$. If deg$(x)=n-1$, then $y$ is a vertex of degree one in $G$, otherwise $\gamma_{rI}(G)=2$. Moreover, if there exists a vertex $z\neq y$ of degree one, then $f(z)=1$ which is impossible. So, $G$ satisfies ($i$). We now assume that deg$(x)=n-2$. Since $G$ is connected, it follows that deg$(y)\geq1$. Moreover, $N(y)\subset V(G)\setminus\{x,y\}$, for otherwise $\gamma_{rI}(G)=2$. We now deduce that $G$ satisfies ($ii$) by using $G[V_{0}]$ instead of $H$ in ($ii$). We have shown that $G\in \Psi$ in this case.\vspace{0.75mm}

\textit{Case 2.} Suppose that $(|V_{1}|,|V_{2}|)=(3,0)$ and $G\neq P_{3}$. In such a situation, it is readily seen that $G$ is obtained from two graphs $H=G[V_{1}]$ and $K=G[V_{0}]$. That $G$ is a member of $\Theta$ follows by the definition of $f$. Moreover, $G\notin \Omega$ as $\gamma_{rI}(G)\neq2$.
\end{proof}

\subsection{Graphs $G$ with $\gamma_{rI}(G)=i$ when $i\in\{n-1,n\}$}

\begin{theorem}
Let $G$ be a connected graph of order $n$. Then, $\gamma_{rI}(G)=n$ if and only if $G\in\{K_{1},K_{1,n-1} (n\geq2),C_{4},C_{5},P_{4},P_{5},P_{6}\}$.
\end{theorem}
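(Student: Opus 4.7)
The plan is to prove both directions. For the easy direction, I verify that each graph $G$ in the list satisfies $\gamma_{rI}(G)=n$: the constant assignment $f\equiv 1$ always gives an RID function of weight $n$, so only the matching lower bound $\gamma_{rI}(G)\geq n$ needs to be shown. For the star $K_{1,n-1}$, any leaf with $f$-value $0$ is isolated in $G[V_0]$ (its only neighbor is the center, which cannot itself be $0$ by the ID condition applied to the leaf), which forces $f\geq 1$ on every vertex. For $P_4,P_5,P_6$ I use the fact that any endpoint $v$ satisfies $f(v)\geq 1$ (otherwise $v$'s unique neighbor would have to be $2$ by ID, but then $v$ would be isolated in $V_0$), reducing the problem to a short case analysis on $V_0$ restricted to the internal vertices. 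The cycles $C_4,C_5$ are handled by direct enumeration of admissible subsets $V_0$.

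For the reverse direction, assume $\gamma_{rI}(G)=n$. I use two saving moves. \emph{Triangle move}: if $G$ contains a triangle $uvw$, the assignment $V_0=\{u,v\}$, $V_2=\{w\}$, $V_1=V(G)\setminus\{u,v,w\}$ is an RID function of weight $n-1$. \emph{Degree-three move}: if $G$ has an edge $uv$ with $\deg_G(v)\geq 3$ and $u$ has some neighbor $u'\neq v$, then $V_0=\{u,v\}$, $V_2=\{u'\}$, $V_1=V(G)\setminus\{u,v,u'\}$ is an RID function of weight $n-1$; the restrained condition holds via the edge $uv$, the ID condition at $u$ is witnessed by $u'\in V_2$, and the ID condition at $v$ holds because the remaining $\deg(v)-1\geq 2$ neighbors of $v$ all carry value at least $1$. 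Both moves contradict $\gamma_{rI}(G)=n$: the first rules out triangles (in particular $C_3$), while the second forces any vertex $v$ with $\deg(v)\geq 3$ to have only leaves as neighbors, which together with the connectivity of $G$ gives $G=K_{1,n-1}$.

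In the remaining case $\Delta(G)\leq 2$, $G$ is $K_1$, a path $P_n$ or a cycle $C_n$. It then suffices to show $\gamma_{rI}(P_n)<n$ for $n\geq 7$ and $\gamma_{rI}(C_n)<n$ for $n\geq 6$. For both $P_n$ with $n\geq 7$ and $C_n$ with $n\geq 7$, the assignment $f(v_1)=f(v_4)=f(v_7)=2$, $f(v_2)=f(v_3)=f(v_5)=f(v_6)=0$, and $f(v_i)=1$ for $i\geq 8$ is an RID function of weight $n-1$: the four $V_0$ vertices pair up as $\{v_2,v_3\}$ and $\{v_5,v_6\}$, so $G[V_0]$ has no isolated vertex, and the ID condition is met with equality at each via the adjacent value-$2$ vertices $v_1,v_4,v_7$. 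The only remaining case is $C_6$, handled by the pattern $(2,0,0,2,0,0)$ of weight $4<6$.

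The principal obstacle is the finite small-case verification confirming $\gamma_{rI}(G)=n$ for each of $P_4,P_5,P_6,C_4,C_5$, where one must carefully rule out every possible non-empty $V_0$ after observing that degree-one vertices are forced to satisfy $f\geq 1$; the two saving moves do the heavy lifting by reducing the structural problem to this finite list of small graphs.
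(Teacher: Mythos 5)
Your proposal is correct, and while it uses the same two saving assignments as the paper, it organizes the case analysis along a genuinely different and more uniform route. The paper splits on whether $G$ is a tree: for trees it applies exactly your degree-three move (a vertex $v$ of degree at least three adjacent to a non-leaf $u$ having a further neighbor $w$, with $(f(u),f(v),f(w))=(0,0,2)$) to force $\Delta(T)\leq 2$, but for graphs containing cycles it switches to a longest-path/longest-cycle analysis: triangle-freeness, longest path on at most $6$ vertices, hence longest cycle of length at most $6$, exclusion of $C_6$ by the $(2,0,0,2,0,0)$ pattern, and then separate ad hoc assignments to rule out pendant vertices and chords attached to a $C_4$ or $C_5$. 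Your observation that the degree-three move remains valid in an arbitrary graph --- the ID condition at $v$ being met either by $u'\in N(v)$ carrying value $2$ or by the $\deg(v)-1\geq 2$ remaining neighbors each carrying value at least $1$ --- subsumes all of that: combined with the triangle move it reduces the problem in one step to $\Delta(G)\leq 2$, i.e.\ to $K_1$, paths and cycles, after which the same $P_n$ ($n\geq 7$) and $C_n$ ($n\geq 6$) patterns finish the argument, with $C_3$ already killed as a triangle. What the paper's route buys is reusability, since the longest-path and longest-cycle machinery is exploited again in the subsequent characterization of graphs with $\gamma_{rI}(G)=n-1$; what yours buys is a shorter and cleaner proof of this particular theorem. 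The finite verifications you defer ($K_{1,n-1}$, $P_4$, $P_5$, $P_6$, $C_4$, $C_5$, each via the observation that degree-one vertices must receive a positive value) are routine and consistent with what the paper also leaves to direct checking.
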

\begin{proof}
Let $\gamma_{rI}(G)=n$. We distinguish two cases depending on the existence of cycles in $G$.\vspace{0.75mm}

\textit{Case 1.} Suppose that $G=T$ is a tree. If $T=K_{1}$ or $T=K_{1,n-1}$ for $n\geq2$, then we are done. So, we may assume that $T$ is neither a trivial tree nor a star. We claim that $\Delta(T)\leq2$. Suppose to the contrary that there exists a vertex $v$ with deg$(v)\geq3$. Since $T$ is not a star, it follows that the vertex $v$ is adjacent to a non-leaf vertex $u$. Let $w$ be a neighbor of $u$ different from $v$. Then, the assignment $(f(u),f(v),f(w))=(0,0,2)$ and $f(x)=1$ for the other vertices $x$ defines an RID function with weight $\omega(f)=n-1$, a contradiction. Therefore, $\Delta(T)=2$, and so $T$ is isomorphic to a path on $n\geq4$ vertices. Consider the path $P_{n}:x_{1}x_{2}\cdots x_{n}$ for $n\geq7$. It is easy to see that $g(x_{1})=g(x_{4})=g(x_{7})=2$, $g(x_{2})=g(x_{3})=g(x_{5})=g(x_{6})=0$ and $g(x_{i})=1$ for $i\geq8$ (if any) is an RID function of $P_{n}$ with weight $\omega(g)=n-1$, a contradiction. The above discussion shows that $T\in\{K_{1},K_{1,n-1} (n\geq2),P_{4},P_{5},P_{6}\}$.\vspace{0.75mm}

\textit{Case 2.} Suppose that $G$ contains at least one cycle. It is easily seen that $G$ is triangle-free, otherwise $\gamma_{rI}(G)<n$. Let $P_{t}$ be a longest path in $G$. An argument similar to what presented in Case $1$ (related to $P_{n}$ for $n\geq7$) implies that $t\leq6$. We now consider a $k$-cycle $C_{k}:x_{1}x_{2}\cdots x_{k}x_{1}$ in $G$. Since $G$ does not contain any path on $k\geq7$ vertices as a subgraph, it follows that $k\leq6$. If $k=6$, then $h(x_{1})=h(x_{4})=2$, $h(x_{2})=h(x_{3})=h(x_{5})=h(x_{6})=0$ and $h(x)=1$ for the other vertices $x$ (if any) defines an RID function with weight $n-2$, a contradiction. Therefore, $k\in\{4,5\}$. Let $k=5$. Suppose that $V(C_{5})\subset V(G)$ and that $v\in V(G)\setminus V(C_{5})$ is adjacent to a vertex
of $C_{5}$, say $x_{1}$. Then, the assignment $(h(x_{1}),h(x_{5}),h(x_{4}))=(0,0,2)$ and $h(x)=1$ for any other vertex $x$ defines an RID function with weight $n-1$. This is a contradiction. Therefore, $V(C_{5})=V(G)$. On the other hand, there is no chord between any to vertices in $V(C_{5})$ since $G$ is triangle-free. Thus, $G=C_{5}$. A similar argument implies that $G=C_{4}$ when $k=4$. In such a case, we have proved that $G\in\{C_{4},C_{5}\}$.

Conversely, it is easily verified that $\gamma_{rI}(G)=n$ if $G\in\{K_{1},K_{1,n-1} (n\geq2),C_{4},C_{5},P_{4},P_{5},P_{6}\}$.
\end{proof}

In order to characterize the family of all connected graphs $G$ with $\gamma_{rI}(G)=|V(G)|-1$, we shall need the following helpful lemma.

\begin{lemma}\label{lem1}
Let $G$ be a connected graph of order $n$. Then, $\gamma_{rI}(G)\le n-2$ if one of the following statements holds.\vspace{0.5mm}\\
\emph{(1)} There exist two adjacent vertices $u,v\in V(G)$ such that $deg(u),deg(v)\geq 3$.\vspace{0.5mm}\\
\emph{(2)} $diam(G)\geq 9$.\vspace{0.5mm}\\
\emph{(3)} There exist two vertices $u,v\in V(G)$ with $d(u,v)=4$, $deg(u)\geq 3$ and $deg(v)\geq 2$.\vspace{0.5mm}\\
\emph{(4)} There exist at least three edge disjoint paths $P_l$, $P_k$ and $P_m$ \emph{(}$k,l,m\ge 4$\emph{)} which have precisely one end point $u$ in common.\vspace{0.5mm}\\
\emph{(5)} There exists a subgraph $G'$ obtained from $P_7:v_1v_2v_3v_4v_5v_6v_7$ by joining a vertex $v_{3}'$ to $v_3$ and a vertex $v_{5}'$ to $v_5$.
\end{lemma}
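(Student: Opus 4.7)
The plan is to verify each of the five sufficient conditions by exhibiting an explicit restrained Italian dominating function $f$ of weight at most $n-2$, starting from the default assignment $f\equiv 1$ (weight $n$) and dropping a small set of vertices to $0$ inside a suitably chosen subgraph. In every construction the $0$-vertices will come in adjacent pairs, so the ``no isolated vertex in $G[V_0]$'' requirement is immediate, and each $0$-vertex will be adjacent either to a weight-$2$ vertex or to two weight-$1$ vertices, so that $f(N(v))\ge 2$ is satisfied.

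For (1), I would set $f(u)=f(v)=0$ and $f\equiv 1$ elsewhere; the edge $uv$ takes care of $G[V_0]$, and since $\deg(u),\deg(v)\ge 3$ both $u$ and $v$ have at least two further neighbors, all of weight $1$, giving $f(N(u)),f(N(v))\ge 2$ and weight $n-2$. For (2), along a shortest path $v_0v_1\cdots v_9$ of length $9$ I would assign $f(v_{3i})=2$ for $i\in\{0,1,2,3\}$, $f(v_j)=0$ for $j\in\{1,2,4,5,7,8\}$, and $f\equiv 1$ on $V(G)\setminus\{v_0,\dots,v_9\}$; every $0$-vertex is adjacent on the path both to another $0$ and to a $2$, so $f$ is an RID function of weight $4\cdot 2+(n-10)=n-2$. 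For (3), I would use the shortest path $u=w_0w_1w_2w_3w_4=v$ together with $a,b\in N(u)\setminus\{w_1\}$ (from $\deg(u)\ge 3$) and $c\in N(v)\setminus\{w_3\}$ (from $\deg(v)\ge 2$); a short argument from $d(u,v)=4$ shows that these eight vertices are pairwise distinct. Then $f(u)=f(w_1)=f(w_3)=f(v)=0$, $f(w_2)=f(c)=2$, $f\equiv 1$ elsewhere is an RID function of weight $n-2$: the pairs $\{u,w_1\}$ and $\{w_3,v\}$ handle $G[V_0]$, while at $u$ the neighbors $a,b$ supply $f(N(u))\ge 2$, and at $w_1,w_3,v$ the vertices $w_2$ and $c$ do the same. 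For (5), on the nine vertices of $G'$ I would set $f(v_1)=f(v_7)=2$, $f(v_2)=f(v_3)=f(v_5)=f(v_6)=0$, $f(v_4)=f(v_3')=f(v_5')=1$, and $f\equiv 1$ outside $V(G')$; the $0$-pairs $\{v_2,v_3\}$ and $\{v_5,v_6\}$ settle $G[V_0]$, the $2$'s at $v_1,v_7$ cover $v_2,v_6$, and $v_4$ together with $v_3',v_5'$ cover $v_3,v_5$, yielding total weight $7+(n-9)=n-2$.

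For (4), under the natural reading that the three paths share only the vertex $u$, on each path $P^i$ I would pick its first three interior vertices $x_1^i,x_2^i,x_3^i$ and set $f(u)=2$, $f(x_1^i)=f(x_2^i)=0$, $f(x_3^i)=2$, with $f\equiv 1$ everywhere else; each branch reproduces the local pattern used in part (2), so the RID conditions hold and the total weight is $2+3\cdot 2+(n-10)=n-2$. The main obstacle is what happens if two of the paths meet at an internal vertex at ``incompatible'' depths (for instance, $x_3^i$ of one path coinciding with $x_2^j$ of another), which would make the above assignment inconsistent. I expect the cleanest resolution is to argue that any such coincidence forces one of the other conditions of the lemma to hold (an extra adjacency of two high-degree vertices, a distance-$4$ pair of the required degrees, or a copy of the $G'$-subgraph), thereby reducing to an already-handled case; failing that, a short case analysis adjusting the values on the shared vertices should still produce an RID function of weight at most $n-2$. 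Up to this single delicate subcase, the entire lemma is a routine verification of the two local conditions defining an RID function.
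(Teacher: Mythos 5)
Your proof is correct and matches the paper's essentially verbatim: all five parts use the same explicit assignments (the adjacent pair set to $0$ in (1), the $2,0,0$-pattern along a length-$9$ geodesic in (2), the $0,0,2$-blocks around $w_2$ and $t\in N(v)$ in (3), the per-branch $2,0,0,2$ pattern in (4), and the $v_1,v_7\mapsto 2$, $v_2,v_3,v_5,v_6\mapsto 0$ assignment in (5)), and your extra care in (3) about the distinctness of $u,w_1,w_2,w_3,v$ and the neighbours $a,b,c$ only makes explicit what the paper leaves implicit. The one place you hedge --- possible coincidences among internal vertices of the three paths in (4) --- does not arise in the paper's proof, which reads the hypothesis ``precisely one end point $u$ in common'' as saying the paths pairwise meet only in $u$, so the assignment is well defined without any further case analysis.
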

\begin{proof}
(1) The assignment $f(u)=f(v)=0$ and $f(x)=1$ for each $x\in V(G)\setminus\{u,v\}$ is an RID function. So, $\gamma_{rI}(G)\leq w(f)=n-2$.

(2) Let $d(u,v)=9$ and $uw_1w_2...w_8v$ be a $u,v$-path. Note that $f(u)=f(w_3)=f(w_6)=f(v)=2$, $f(w_1)=f(w_2)=f(w_4)=f(w_5)=f(w_7)=f(w_8)=0$ and $f(x)=1$ for any other vertex $x$ is an RID function of $G$ with weight $n-2$. So, we have $\gamma_{rI}(G)\leq n-2$.

(3) Let $uw_1w_2w_3v$ be a $u,v$-path of length four and $t \in N(v)\setminus \{w_3\}$. Then, $f(u)=f(w_1)=f(w_3)=f(v)=0$, $f(w_2)=f(t)=2$ and $f(x)=1$ for $x\in V(G)\setminus\{u,w_1,w_2,w_3,v,t\}$ is an RID function of $G$. Therefore, $\gamma_{rI}(G)\leq w(f)=n-2$.

(4) Let $P_k:x_1x_2x_3\cdots x_k$, $P_l:y_1y_2y_3\cdots y_l$, $P_m:z_1z_2z_3\cdots z_m$ in which $u=x_1=y_1=z_1$. We assign $2$ to $u,x_4,y_4,z_4$, $0$ to $x_2, x_3, y_2, y_3, z_2, z_3$ and $1$ to the other vertices. This gives us an RID function of $G$ with weight $n-2$. So, $\gamma_{rI}(G)\leq w(f)=n-2$.

(5) Assigning $2$ to $v_1$ and $v_7$, $0$ to $v_2,v_3,v_5$ and $v_6$, and $1$ to the other vertices gives us an RID function of $G$. Therefore, $\gamma_{rI}(G)\leq w(f)=n-2$.
\end{proof}

We make use of the family $\mathcal{G}$ depicted in Figure \ref{fig2} so as to give the characterization of all connected graphs  for which the RID number equals the order minus one. We need to mention some supplementary explanations concerning this family.\\
\textit{1.} Both vertices $u$ and $v$ have degree at least three in $G_1$.\\
\textit{2.} There exist $t\geq1$ leaves at distance at most two from the cycle $C_5$ in $G_2$.\\
\textit{3.} There exist exactly one leaf at distance three and $t\geq0$ leaves at distance at most two from $C_5$ in $G_3$.\\
\textit{4.} There exist exactly one leaf at distance four and $t\geq0$ leaves at distance at most two from $C_5$ in $G_4$.\\
\textit{5.} There are exactly two cycles $C_5$ and $t\geq0$ leaves at distance at most two from them in $G_5$.\\
\textit{6.} There are $t\geq1$ leaves at distance at most two from $C_3$ in $G_{11}$.\\
\textit{7.} There are $t\geq1$ leaves at distance at most two from $u$ in $T_1$, $T_2$ and $T_3$.\\
\textit{8.} There are $t\geq1$ leaves different from $v$ at distance at most two from $u$ in $T_4$.\\
\textit{9.} There are $t\geq1$ leaves different from $u$ and $v$ in $T_5$.\\
\textit{10.} In $T_6$, there are $t\geq1$ leaves different from $u$ at distance at most two from $v$ and there is at least one leaf different from $x$ adjacent to $w$.\\
\textit{11.} There are $t\geq1$ leaves different from $v$ and $w$ at distance at most two from $u$ in $T_7$ and $T_8$.\\
\textit{12.} There are $t\geq1$ leaves different from $u$ and $v$ in $T_9$.\\\\\\

\begin{figure}[h]\vspace{-18mm}
\tikzstyle{every node}=[circle, draw, fill=white!, inner sep=0pt,minimum width=.16cm]
\begin{center}
\begin{tikzpicture}[thick,scale=.6]
  \draw(0,0) {

+(-13.5,-4) node{}
+(-10.5,-4) node{}
+(-12,-3) node{}
+(-13,-5.6) node{}
+(-11,-5.6) node{}
+(-12.75,-2) node{}
+(-11.25,-2) node{}
+(-14.2,-6.4) node{}
+(-12.8,-6.9) node{}
+(-13,-5.6) --+(-13.5,-4) --+(-12,-3) --+(-10.5,-4) --+(-11,-5.6) --+(-13,-5.6)
+(-12.75,-2) --+(-12,-3) --+(-11.25,-2)
+(-14.2,-6.4) --+(-13,-5.6) --+(-12.8,-6.9)

+(-12.2,-2.25) node[rectangle, draw=white!0, fill=white!100]{$\textbf{.}$}
+(-12,-2.25) node[rectangle, draw=white!0, fill=white!100]{$\textbf{.}$}
+(-11.8,-2.25) node[rectangle, draw=white!0, fill=white!100]{$\textbf{.}$}
+(-13.6,-6.4) node[rectangle, draw=white!0, fill=white!100]{$\textbf{.}$}
+(-13.4,-6.5) node[rectangle, draw=white!0, fill=white!100]{$\textbf{.}$}
+(-13.2,-6.6) node[rectangle, draw=white!0, fill=white!100]{$\textbf{.}$}

+(-12.5,-2.95) node[rectangle, draw=white!0, fill=white!100]{$u$}
+(-13.4,-5.5) node[rectangle, draw=white!0, fill=white!100]{$v$}

+(-11.9,-4.5) node[rectangle, draw=white!0, fill=white!100]{$G_{1}$}


+(-8.5,-4.5) node{}
+(-5.5,-4.5) node{}
+(-7,-3.5) node{}
+(-8,-6.1) node{}
+(-6,-6.1) node{}
+(-9.5,-2.75) node{}
+(-8.25,-2.5) node{}
+(-7,-2.5) node{}
+(-7,-1.5) node{}
+(-6.35,-2.6) node{}
+(-5.6,-1.55) node{}
+(-8,-6.1) --+(-8.5,-4.5) --+(-7,-3.5) --+(-5.5,-4.5) --+(-6,-6.1) --+(-8,-6.1)
+(-9.5,-2.75) --+(-7,-3.5) --+(-8.25,-2.5)
+(-7,-3.5) --+(-7,-2.5) --+(-7,-1.5)
+(-7,-3.5) --+(-6.35,-2.6) --+(-5.6,-1.55)

+(-8.75,-2.75) node[rectangle, draw=white!0, fill=white!100]{$\textbf{.}$}
+(-8.55,-2.75) node[rectangle, draw=white!0, fill=white!100]{$\textbf{.}$}
+(-8.35,-2.75) node[rectangle, draw=white!0, fill=white!100]{$\textbf{.}$}
+(-6.6,-1.9) node[rectangle, draw=white!0, fill=white!100]{$\textbf{.}$}
+(-6.4,-1.9) node[rectangle, draw=white!0, fill=white!100]{$\textbf{.}$}
+(-6.2,-1.9) node[rectangle, draw=white!0, fill=white!100]{$\textbf{.}$}

+(-6.9,-5) node[rectangle, draw=white!0, fill=white!100]{$G_{2}$}


+(-3.5,-4.5) node{}
+(-0.5,-4.5) node{}
+(-2,-3.5) node{}
+(-3,-6.1) node{}
+(-1,-6.1) node{}
+(-4.5,-2.75) node{}
+(-3.25,-2.5) node{}
+(-2,-2.5) node{}
+(-2,-1.5) node{}
+(-1.35,-2.6) node{}
+(-0.6,-1.55) node{}
+(-0.8,-3) node{}
+(0.2,-2.55) node{}
+(1.2,-2.05) node{}
+(-3,-6.1) --+(-3.5,-4.5) --+(-2,-3.5) --+(-0.5,-4.5) --+(-1,-6.1) --+(-3,-6.1)
+(-4.5,-2.75) --+(-2,-3.5) --+(-3.25,-2.5)
+(-2,-3.5) --+(-2,-2.5) --+(-2,-1.5)
+(-2,-3.5) --+(-1.35,-2.6) --+(-0.6,-1.55)
+(-2,-3.5) --+(-0.8,-3) --+(0.2,-2.55) --+(1.2,-2.05)

+(-3.75,-2.75) node[rectangle, draw=white!0, fill=white!100]{$\textbf{.}$}
+(-3.55,-2.75) node[rectangle, draw=white!0, fill=white!100]{$\textbf{.}$}
+(-3.35,-2.75) node[rectangle, draw=white!0, fill=white!100]{$\textbf{.}$}
+(-1.6,-1.9) node[rectangle, draw=white!0, fill=white!100]{$\textbf{.}$}
+(-1.4,-1.9) node[rectangle, draw=white!0, fill=white!100]{$\textbf{.}$}
+(-1.2,-1.9) node[rectangle, draw=white!0, fill=white!100]{$\textbf{.}$}

+(-1.9,-5) node[rectangle, draw=white!0, fill=white!100]{$G_{3}$}


+(2.5,-4.5) node{}
+(5.5,-4.5) node{}
+(4,-3.5) node{}
+(3,-6.1) node{}
+(5,-6.1) node{}
+(1.5,-2.75) node{}
+(2.75,-2.5) node{}
+(4,-2.5) node{}
+(4,-1.5) node{}
+(4.65,-2.6) node{}
+(5.4,-1.55) node{}
+(5.2,-3) node{}
+(6.2,-2.55) node{}
+(7.2,-2.05) node{}
+(8.25,-1.55) node{}
+(3,-6.1) --+(2.5,-4.5) --+(4,-3.5) --+(5.5,-4.5) --+(5,-6.1) --+(3,-6.1)
+(1.5,-2.75) --+(4,-3.5) --+(2.75,-2.5)
+(4,-3.5) --+(4,-2.5) --+(4,-1.5)
+(4,-3.5) --+(4.65,-2.6) --+(5.4,-1.55)
+(4,-3.5) --+(5.2,-3) --+(6.2,-2.55) --+(7.2,-2.05) --+(8.25,-1.55)

+(2.25,-2.75) node[rectangle, draw=white!0, fill=white!100]{$\textbf{.}$}
+(2.45,-2.75) node[rectangle, draw=white!0, fill=white!100]{$\textbf{.}$}
+(2.65,-2.75) node[rectangle, draw=white!0, fill=white!100]{$\textbf{.}$}
+(4.4,-1.9) node[rectangle, draw=white!0, fill=white!100]{$\textbf{.}$}
+(4.6,-1.9) node[rectangle, draw=white!0, fill=white!100]{$\textbf{.}$}
+(4.8,-1.9) node[rectangle, draw=white!0, fill=white!100]{$\textbf{.}$}

+(4.1,-5) node[rectangle, draw=white!0, fill=white!100]{$G_{4}$}

+(-13,-10.5) node{}
+(-10,-10.5) node{}
+(-11.5,-9.5) node{}
+(-12.5,-12.1) node{}
+(-10.5,-12.1) node{}
+(-14,-8.75) node{}
+(-12.75,-8.5) node{}
+(-11.5,-8.5) node{}
+(-11.5,-7.5) node{}
+(-10.85,-8.6) node{}
+(-10.1,-7.55) node{}
+(-10.3,-9.4) node{}
+(-9.3,-8.95) node{}
+(-8.3,-8.35) node{}
+(-7.7,-7.35) node{}
+(-12.5,-12.1) --+(-13,-10.5) --+(-11.5,-9.5) --+(-10,-10.5) --+(-10.5,-12.1) --+(-12.5,-12.1)
+(-14,-8.75) --+(-11.5,-9.5) --+(-12.75,-8.5)
+(-11.5,-9.5) --+(-11.5,-8.5) --+(-11.5,-7.5)
+(-11.5,-9.5) --+(-10.85,-8.6) --+(-10.1,-7.55)
+(-11.5,-9.5) --+(-10.3,-9.4) --+(-9.3,-8.95) --+(-8.3,-8.35) --+(-7.7,-7.35)
+(-11.5,-9.5) --+(-7.7,-7.35)

+(-13.25,-8.75) node[rectangle, draw=white!0, fill=white!100]{$\textbf{.}$}
+(-13.05,-8.75) node[rectangle, draw=white!0, fill=white!100]{$\textbf{.}$}
+(-12.85,-8.75) node[rectangle, draw=white!0, fill=white!100]{$\textbf{.}$}
+(-11.1,-7.9) node[rectangle, draw=white!0, fill=white!100]{$\textbf{.}$}
+(-10.9,-7.9) node[rectangle, draw=white!0, fill=white!100]{$\textbf{.}$}
+(-10.7,-7.9) node[rectangle, draw=white!0, fill=white!100]{$\textbf{.}$}

+(-11.4,-11) node[rectangle, draw=white!0, fill=white!100]{$G_{5}$}

+(-7,-9) node{}
+(-8,-10) node{}
+(-6,-10) node{}
+(-7,-11) node{}

+(-7,-7.75) node{}
+(-7,-12.25) node{}

+(-7,-9) --+(-8,-10) --+(-7,-11) --+(-6,-10) --+(-7,-9)
+(-7,-7.75) --+(-7,-9)
+(-7,-12.25) --+(-7,-11)

+(-7,-10) node[rectangle, draw=white!0, fill=white!100]{$G_{6}$}


+(-4,-9.5) node{}
+(-5,-10.5) node{}
+(-3,-10.5) node{}
+(-4,-11.5) node{}

+(-4.8,-12.3) node{}

+(-3.2,-8.7) node{}
+(-2.2,-7.7) node{}

+(-4,-9.5) --+(-5,-10.5) --+(-4,-11.5) --+(-3,-10.5) --+(-4,-9.5)
+(-4,-11.5) --+(-4.8,-12.3)
+(-4,-9.5) --+(-3.2,-8.7) --+(-2.2,-7.7)

+(-4,-10.4) node[rectangle, draw=white!0, fill=white!100]{$G_{7}$}


+(-1,-10) node{}
+(-2,-11) node{}
+(0,-11) node{}
+(-1,-12) node{}

+(-0.2,-9.2) node{}
+(0.7,-8.3) node{}
+(1.6,-7.4) node{}

+(-1,-10) --+(-2,-11) --+(-1,-12) --+(0,-11) --+(-1,-10)
+(-1,-10) --+(-0.2,-9.2) --+(0.7,-8.3)
+(0.7,-8.3) --+(1.6,-7.4)

+(-1,-10.9) node[rectangle, draw=white!0, fill=white!100]{$G_{8}$}


+(2,-10) node{}
+(1,-11) node{}
+(3,-11) node{}
+(2,-12) node{}

+(2.8,-9.2) node{}
+(3.7,-8.3) node{}

+(2,-10) --+(1,-11) --+(2,-12) --+(3,-11) --+(2,-10)
+(2,-10) --+(2.8,-9.2) --+(3.7,-8.3)

+(2,-11) node[rectangle, draw=white!0, fill=white!100]{$G_{9}$}


+(5,-9.8) node{}
+(4,-10.8) node{}
+(6,-10.8) node{}
+(5,-11.8) node{}

+(5,-8.55) node{}

+(5,-9.8) --+(4,-10.8) --+(5,-11.8) --+(6,-10.8) --+(5,-9.8)
+(5,-8.55) --+(5,-9.8)

+(5,-10.8) node[rectangle, draw=white!0, fill=white!100]{$G_{10}$}


+(-14,-16.95) node{}
+(-11,-16.95) node{}
+(-12.5,-15.45) node{}
+(-15,-14.7) node{}
+(-13.75,-14.45) node{}
+(-12.5,-14.45) node{}
+(-12.5,-13.45) node{}
+(-11.85,-14.55) node{}
+(-11.1,-13.5) node{}

+(-14,-16.95) --+(-12.5,-15.45) --+(-11,-16.95)
+(-15,-14.7) --+(-12.5,-15.45) --+(-13.75,-14.45)
+(-12.5,-15.45) --+(-12.5,-14.45) --+(-12.5,-13.45)
+(-12.5,-15.45) --+(-11.85,-14.55) --+(-11.1,-13.5)
+(-14,-16.95) --+(-11,-16.95)

+(-14.25,-14.7) node[rectangle, draw=white!0, fill=white!100]{$\textbf{.}$}
+(-14.05,-14.7) node[rectangle, draw=white!0, fill=white!100]{$\textbf{.}$}
+(-13.85,-14.7) node[rectangle, draw=white!0, fill=white!100]{$\textbf{.}$}
+(-12.1,-13.85) node[rectangle, draw=white!0, fill=white!100]{$\textbf{.}$}
+(-11.9,-13.85) node[rectangle, draw=white!0, fill=white!100]{$\textbf{.}$}
+(-11.7,-13.85) node[rectangle, draw=white!0, fill=white!100]{$\textbf{.}$}

+(-12.4,-16.4) node[rectangle, draw=white!0, fill=white!100]{$G_{11}$}


+(-7,-14.5) node{}
+(-7.8,-15.2) node{}
+(-8.55,-15.85) node{}
+(-9.25,-16.45) node{}
+(-10,-17.1) node{}
+(-7,-14.5) --+(-7.8,-15.2) --+(-8.55,-15.85) --+(-9.25,-16.45) --+(-10,-17.1)

+(-6.2,-15.2) node{}
+(-5.45,-15.85) node{}
+(-4.75,-16.45) node{}
+(-4,-17.1) node{}
+(-7,-14.5) --+(-6.2,-15.2) --+(-5.45,-15.85) --+(-4.75,-16.45) --+(-4,-17.1)

+(-7.75,-13.5) node{}
+(-6.25,-13.5) node{}
+(-7.75,-13.5) --+(-7,-14.5) --+(-6.25,-13.5)

+(-7.4,-15.6) node{}
+(-7.8,-16.5) node{}
+(-6.6,-15.6) node{}
+(-6.2,-16.5) node{}
+(-7.8,-16.5) --+(-7.4,-15.6) --+(-7,-14.5) --+(-6.6,-15.6) --+(-6.2,-16.5)

+(-7.2,-13.7) node[rectangle, draw=white!0, fill=white!100]{$\textbf{.}$}
+(-7,-13.7) node[rectangle, draw=white!0, fill=white!100]{$\textbf{.}$}
+(-6.8,-13.7) node[rectangle, draw=white!0, fill=white!100]{$\textbf{.}$}
+(-7.2,-16.2) node[rectangle, draw=white!0, fill=white!100]{$\textbf{.}$}
+(-7,-16.2) node[rectangle, draw=white!0, fill=white!100]{$\textbf{.}$}
+(-6.8,-16.2) node[rectangle, draw=white!0, fill=white!100]{$\textbf{.}$}

+(-7.5,-14.5) node[rectangle, draw=white!0, fill=white!100]{$u$}

+(-7,-17.1) node[rectangle, draw=white!0, fill=white!100]{$T_{1}$}


+(0,-14.5) node{}
+(-0.8,-15.2) node{}
+(-1.55,-15.85) node{}
+(-2.25,-16.45) node{}
+(-3,-17.1) node{}
+(0,-14.5) --+(-0.8,-15.2) --+(-1.55,-15.85) --+(-2.25,-16.45) --+(-3,-17.1)

+(0.8,-15.2) node{}
+(1.55,-15.85) node{}
+(2.25,-16.45) node{}
+(0,-14.5) --+(0.8,-15.2) --+(1.55,-15.85) --+(2.25,-16.45)

+(-0.75,-13.5) node{}
+(0.75,-13.5) node{}
+(-0.75,-13.5) --+(0,-14.5) --+(0.75,-13.5)

+(-0.4,-15.6) node{}
+(-0.8,-16.5) node{}
+(0.4,-15.6) node{}
+(0.8,-16.5) node{}
+(-0.8,-16.5) --+(-0.4,-15.6) --+(0,-14.5) --+(0.4,-15.6) --+(0.8,-16.5)

+(-0.2,-13.7) node[rectangle, draw=white!0, fill=white!100]{$\textbf{.}$}
+(0,-13.7) node[rectangle, draw=white!0, fill=white!100]{$\textbf{.}$}
+(0.2,-13.7) node[rectangle, draw=white!0, fill=white!100]{$\textbf{.}$}
+(-0.2,-16.2) node[rectangle, draw=white!0, fill=white!100]{$\textbf{.}$}
+(0,-16.2) node[rectangle, draw=white!0, fill=white!100]{$\textbf{.}$}
+(0.2,-16.2) node[rectangle, draw=white!0, fill=white!100]{$\textbf{.}$}

+(-0.5,-14.5) node[rectangle, draw=white!0, fill=white!100]{$u$}

+(0,-17.3) node[rectangle, draw=white!0, fill=white!100]{$T_{2}$}


+(5.5,-14.5) node{}
+(4.7,-15.2) node{}
+(3.95,-15.85) node{}
+(3.25,-16.45) node{}
+(5.5,-14.5) --+(4.7,-15.2) --+(3.95,-15.85) --+(3.25,-16.45)

+(6.3,-15.2) node{}
+(7.05,-15.85) node{}
+(7.75,-16.45) node{}
+(5.5,-14.5) --+(6.3,-15.2) --+(7.05,-15.85) --+(7.75,-16.45)

+(4.75,-13.5) node{}
+(6.25,-13.5) node{}
+(4.75,-13.5) --+(5.5,-14.5) --+(6.25,-13.5)

+(5.1,-15.6) node{}
+(4.7,-16.5) node{}
+(5.9,-15.6) node{}
+(6.3,-16.5) node{}
+(4.7,-16.5) --+(5.1,-15.6) --+(5.5,-14.5) --+(5.9,-15.6) --+(6.3,-16.5)

+(5.3,-13.7) node[rectangle, draw=white!0, fill=white!100]{$\textbf{.}$}
+(5.5,-13.7) node[rectangle, draw=white!0, fill=white!100]{$\textbf{.}$}
+(5.7,-13.7) node[rectangle, draw=white!0, fill=white!100]{$\textbf{.}$}
+(5.3,-16.2) node[rectangle, draw=white!0, fill=white!100]{$\textbf{.}$}
+(5.5,-16.2) node[rectangle, draw=white!0, fill=white!100]{$\textbf{.}$}
+(5.7,-16.2) node[rectangle, draw=white!0, fill=white!100]{$\textbf{.}$}

+(5,-14.5) node[rectangle, draw=white!0, fill=white!100]{$u$}

+(5.5,-17.3) node[rectangle, draw=white!0, fill=white!100]{$T_{3}$}


+(-12,-19) node{}
+(-12.8,-19.7) node{}
+(-13.55,-20.35) node{}
+(-14.25,-20.95) node{}
+(-15,-21.6) node{}
+(-12,-19) --+(-12.8,-19.7) --+(-13.55,-20.35) --+(-14.25,-20.95) --+(-15,-21.6)

+(-11.2,-19.7) node{}
+(-10.45,-20.35) node{}
+(-12,-19) --+(-11.2,-19.7) --+(-10.45,-20.35)

+(-12.75,-18) node{}
+(-11.25,-18) node{}
+(-12.75,-18) --+(-12,-19) --+(-11.25,-18)

+(-12.4,-20.1) node{}
+(-12.8,-21) node{}
+(-11.6,-20.1) node{}
+(-11.2,-21) node{}
+(-12.8,-21) --+(-12.4,-20.1) --+(-12,-19) --+(-11.6,-20.1) --+(-11.2,-21)

+(-12.2,-18.2) node[rectangle, draw=white!0, fill=white!100]{$\textbf{.}$}
+(-12,-18.2) node[rectangle, draw=white!0, fill=white!100]{$\textbf{.}$}
+(-11.8,-18.2) node[rectangle, draw=white!0, fill=white!100]{$\textbf{.}$}
+(-12.2,-20.7) node[rectangle, draw=white!0, fill=white!100]{$\textbf{.}$}
+(-12,-20.7) node[rectangle, draw=white!0, fill=white!100]{$\textbf{.}$}
+(-11.8,-20.7) node[rectangle, draw=white!0, fill=white!100]{$\textbf{.}$}

+(-12.5,-19) node[rectangle, draw=white!0, fill=white!100]{$u$}
+(-10.45,-20.75) node[rectangle, draw=white!0, fill=white!100]{$v$}

+(-12,-21.5) node[rectangle, draw=white!0, fill=white!100]{$T_{4}$}


+(-9.5,-19) node{}
+(-8.5,-19) node{}
+(-7.5,-19) node{}
+(-6.5,-19) node{}
+(-5.5,-19) node{}
+(-4.5,-19) node{}
+(-9.5,-19) --+(-8.5,-19) --+(-7.5,-19) --+(-6.5,-19) --+(-5.5,-19) --+(-4.5,-19)

+(-9,-20) node{}
+(-8,-20) node{}
+(-9,-20) --+(-8.5,-19) --+(-8,-20)
+(-8.7,-19.85) node[rectangle, draw=white!0, fill=white!100]{$\textbf{.}$}
+(-8.5,-19.85) node[rectangle, draw=white!0, fill=white!100]{$\textbf{.}$}
+(-8.3,-19.85) node[rectangle, draw=white!0, fill=white!100]{$\textbf{.}$}

+(-6,-20) node{}
+(-5,-20) node{}
+(-6,-20) --+(-5.5,-19) --+(-5,-20)
+(-5.7,-19.85) node[rectangle, draw=white!0, fill=white!100]{$\textbf{.}$}
+(-5.5,-19.85) node[rectangle, draw=white!0, fill=white!100]{$\textbf{.}$}
+(-5.3,-19.85) node[rectangle, draw=white!0, fill=white!100]{$\textbf{.}$}

+(-9.5,-18.6) node[rectangle, draw=white!0, fill=white!100]{$u$}
+(-4.5,-18.6) node[rectangle, draw=white!0, fill=white!100]{$v$}

+(-7,-20.6) node[rectangle, draw=white!0, fill=white!100]{$T_{5}$}


+(-3.5,-19) node{}
+(-2.5,-19) node{}
+(-1.5,-19) node{}
+(-0.5,-19) node{}
+(0.5,-19) node{}
+(1.5,-19) node{}
+(-3.5,-19) --+(-2.5,-19) --+(-1.5,-19) --+(-0.5,-19) --+(0.5,-19) --+(1.5,-19)

+(-3.1,-20) node{}
+(-2,-20) node{}
+(-3.1,-20) --+(-1.5,-19) --+(-2,-20)
+(-2.7,-19.95) node[rectangle, draw=white!0, fill=white!100]{$\textbf{.}$}
+(-2.5,-19.95) node[rectangle, draw=white!0, fill=white!100]{$\textbf{.}$}
+(-2.3,-19.95) node[rectangle, draw=white!0, fill=white!100]{$\textbf{.}$}

+(-1.3,-19.8) node{}
+(-1.05,-20.6) node{}
+(-0.7,-19.8) node{}
+(-0,-20.6) node{}
+(-1.05,-20.6) --+(-1.3,-19.8) --+(-1.5,-19) --+(-0.7,-19.8) --+(0,-20.6)
+(-0.7,-20.6) node[rectangle, draw=white!0, fill=white!100]{$\textbf{.}$}
+(-0.5,-20.6) node[rectangle, draw=white!0, fill=white!100]{$\textbf{.}$}
+(-0.3,-20.6) node[rectangle, draw=white!0, fill=white!100]{$\textbf{.}$}

+(0,-20) node{}
+(1,-20) node{}
+(0,-20) --+(0.5,-19) --+(1,-20)
+(0.3,-19.85) node[rectangle, draw=white!0, fill=white!100]{$\textbf{.}$}
+(0.5,-19.85) node[rectangle, draw=white!0, fill=white!100]{$\textbf{.}$}
+(0.7,-19.85) node[rectangle, draw=white!0, fill=white!100]{$\textbf{.}$}

+(-3.5,-18.6) node[rectangle, draw=white!0, fill=white!100]{$u$}
+(-1.5,-18.6) node[rectangle, draw=white!0, fill=white!100]{$v$}
+(0.5,-18.6) node[rectangle, draw=white!0, fill=white!100]{$w$}
+(1.5,-18.6) node[rectangle, draw=white!0, fill=white!100]{$x$}

+(-1.6,-20.8) node[rectangle, draw=white!0, fill=white!100]{$T_{6}$}


+(2.5,-19) node{}
+(3.5,-19) node{}
+(4.5,-19) node{}
+(5.5,-19) node{}
+(6.5,-19) node{}
+(7.5,-19) node{}
+(2.5,-19) --+(3.5,-19) --+(4.5,-19) --+(5.5,-19) --+(6.5,-19) --+(7.5,-19)

+(2.9,-20) node{}
+(4,-20) node{}
+(2.9,-20) --+(4.5,-19) --+(4,-20)
+(3.3,-19.95) node[rectangle, draw=white!0, fill=white!100]{$\textbf{.}$}
+(3.5,-19.95) node[rectangle, draw=white!0, fill=white!100]{$\textbf{.}$}
+(3.7,-19.95) node[rectangle, draw=white!0, fill=white!100]{$\textbf{.}$}

+(4.7,-19.8) node{}
+(4.95,-20.6) node{}
+(5.3,-19.8) node{}
+(6,-20.6) node{}
+(4.95,-20.6) --+(4.7,-19.8) --+(4.5,-19) --+(5.3,-19.8) --+(6,-20.6)
+(5.3,-20.6) node[rectangle, draw=white!0, fill=white!100]{$\textbf{.}$}
+(5.5,-20.6) node[rectangle, draw=white!0, fill=white!100]{$\textbf{.}$}
+(5.7,-20.6) node[rectangle, draw=white!0, fill=white!100]{$\textbf{.}$}

+(4.5,-18.6) node[rectangle, draw=white!0, fill=white!100]{$u$}
+(2.5,-18.6) node[rectangle, draw=white!0, fill=white!100]{$v$}

+(6.7,-20.5) node[rectangle, draw=white!0, fill=white!100]{$T_{7}$}


+(-8,-22.5) node{}
+(-6.75,-22.5) node{}
+(-5.5,-22.5) node{}
+(-4.25,-22.5) node{}
+(-3,-22.5) node{}
+(-8,-22.5) --+(-6.75,-22.5) --+(-5.5,-22.5) --+(-4.25,-22.5) --+(-3,-22.5)

+(-7.1,-23.5) node{}
+(-6,-23.5) node{}
+(-7.1,-23.5) --+(-5.5,-22.5) --+(-6,-23.5)
+(-6.7,-23.5) node[rectangle, draw=white!0, fill=white!100]{$\textbf{.}$}
+(-6.5,-23.5) node[rectangle, draw=white!0, fill=white!100]{$\textbf{.}$}
+(-6.3,-23.5) node[rectangle, draw=white!0, fill=white!100]{$\textbf{.}$}

+(-4.2,-23.3) node{}
+(-5,-23.3) node{}
+(-3.4,-23.9) node{}
+(-4.5,-24.1) node{}
+(-4.5,-24.1) --+(-5,-23.3) --+(-5.5,-22.5) --+(-4.2,-23.3) --+(-3.4,-23.9)
+(-4.2,-23.9) node[rectangle, draw=white!0, fill=white!100]{$\textbf{.}$}
+(-4,-23.9) node[rectangle, draw=white!0, fill=white!100]{$\textbf{.}$}
+(-3.8,-23.9) node[rectangle, draw=white!0, fill=white!100]{$\textbf{.}$}

+(-8,-22.1) node[rectangle, draw=white!0, fill=white!100]{$v$}
+(-5.5,-22.1) node[rectangle, draw=white!0, fill=white!100]{$u$}
+(-3,-22.1) node[rectangle, draw=white!0, fill=white!100]{$w$}

+(-5.5,-24.3) node[rectangle, draw=white!0, fill=white!100]{$T_{8}$}


+(-1,-22.5) node{}
+(0.25,-22.5) node{}
+(1.5,-22.5) node{}
+(2.75,-22.5) node{}
+(4,-22.5) node{}
+(-1,-22.5) --+(0.25,-22.5) --+(1.5,-22.5) --+(2.75,-22.5) --+(4,-22.5)

+(-0.25,-23.5) node{}
+(0.75,-23.5) node{}
+(-0.25,-23.5) node{} --+(0.25,-22.5) --+(0.75,-23.5)
+(0.05,-23.5) node[rectangle, draw=white!0, fill=white!100]{$\textbf{.}$}
+(0.25,-23.5) node[rectangle, draw=white!0, fill=white!100]{$\textbf{.}$}
+(0.45,-23.5) node[rectangle, draw=white!0, fill=white!100]{$\textbf{.}$}

+(2.25,-23.5) node{}
+(3.25,-23.5) node{}
+(2.25,-23.5) node{} --+(2.75,-22.5) --+(3.25,-23.5)
+(2.55,-23.5) node[rectangle, draw=white!0, fill=white!100]{$\textbf{.}$}
+(2.75,-23.5) node[rectangle, draw=white!0, fill=white!100]{$\textbf{.}$}
+(2.95,-23.5) node[rectangle, draw=white!0, fill=white!100]{$\textbf{.}$}

+(-1,-22.1) node[rectangle, draw=white!0, fill=white!100]{$u$}
+(4,-22.1) node[rectangle, draw=white!0, fill=white!100]{$v$}

+(1.5,-24.3) node[rectangle, draw=white!0, fill=white!100]{$T_{9}$}
};
\end{tikzpicture}
\end{center}\vspace{-4mm}
\caption{Family $\mathcal{G}$ of graphs $G$ with $\gamma_{rI}(G)=|V(G)|-1$.}\label{fig2}
\end{figure}
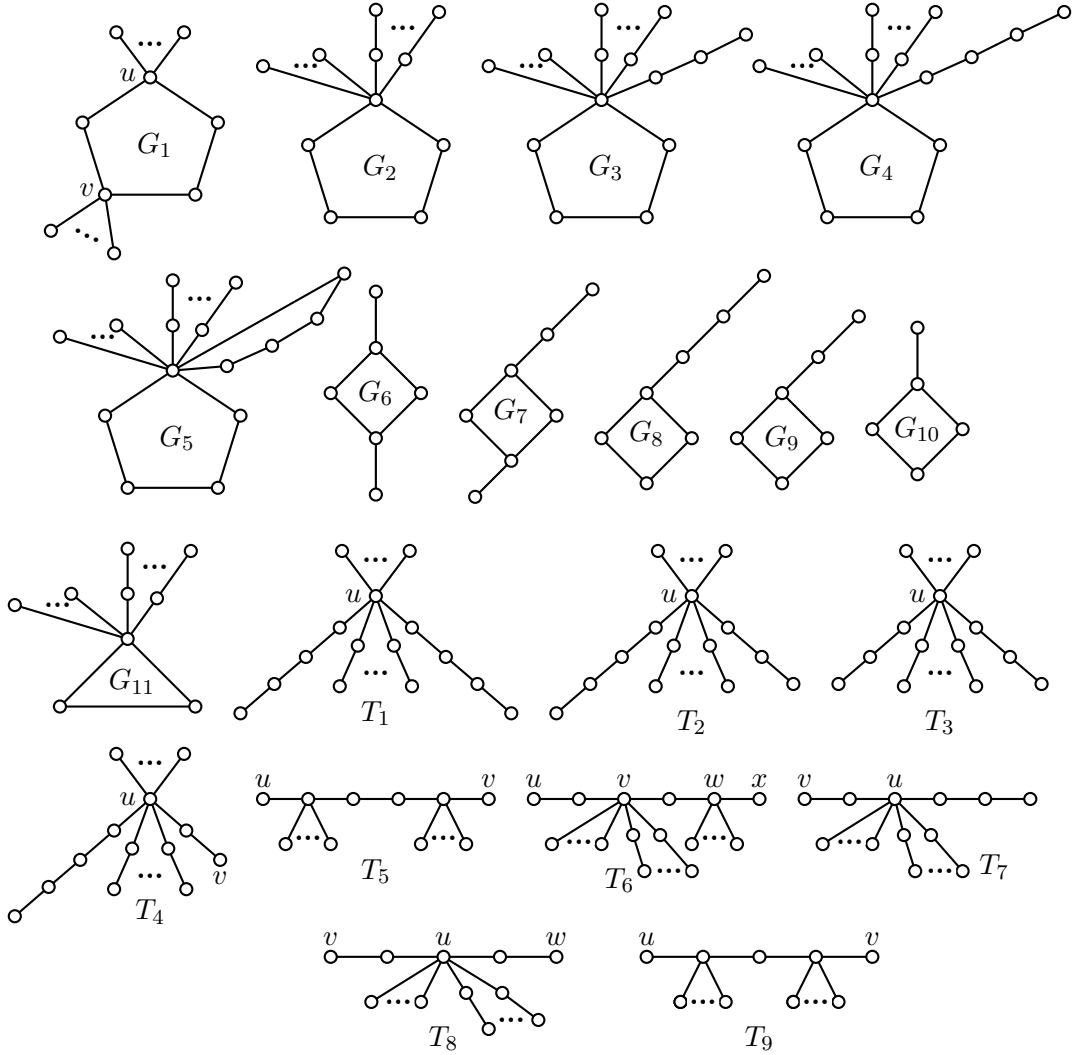

\begin{theorem}
Let $G$ be a connected graph of order $n$. Then, $\gamma_{rI}(G)=n-1$ if and only if $G\in \mathcal{G}\cup\{C_3,C_7,C_8,P_7,P_8,P_9,S_{1,q}\}$ in which $q\geq2$.
\end{theorem}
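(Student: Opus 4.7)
The plan is to prove both directions, with the harder one being that $\gamma_{rI}(G)=n-1$ forces $G$ into the listed family. For the forward direction I would take each graph in $\mathcal{G}\cup\{C_3,C_7,C_8,P_7,P_8,P_9,S_{1,q}\}$ and exhibit an explicit RID function of weight $n-1$, then verify that no lighter function exists. For the simpler members $C_3,C_7,C_8,P_7,P_8,P_9,S_{1,q}$ a direct case-check suffices (using, where convenient, the previous theorem characterizing the graphs with $\gamma_{rI}(G)=n$ to rule out smaller weight by contradiction; for instance, for $S_{1,q}$ the only way to make $f(a)=f(b)=0$ valid requires $f(a_1)=2$ and each leaf of $b$ receiving $1$, pinning down the weight at $q+2$). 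For each of the cyclic families $G_1,\ldots,G_{11}$ and the tree families $T_1,\ldots,T_9$ pictured in Figure~\ref{fig2}, I would describe a natural labelling achieving weight $n-1$ and then argue the matching lower bound by a short local analysis of the possible values at the apex vertex and its neighbours.

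For the reverse direction I assume $\gamma_{rI}(G)=n-1$ and invoke every part of Lemma \ref{lem1} as a forbidden-configuration constraint: part (1) rules out two adjacent vertices both of degree at least three, part (2) forces $\mbox{diam}(G)\le 8$, part (3) severely limits what may occur at distance four from a vertex of degree at least three, and parts (4) and (5) exclude the ``three long arms'' configuration and the doubly-subdivided $P_7$, respectively. I also exclude the graphs $K_1$, $K_{1,n-1}$, $C_4$, $C_5$, $P_4$, $P_5$, $P_6$ identified in the preceding theorem, since those satisfy $\gamma_{rI}(G)=n$ rather than $n-1$.

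I then split into the tree case and the cyclic case. If $G$ is a tree with $\Delta(G)\le 2$, then $G$ is a path, and a direct computation pins down exactly $P_7,P_8,P_9$. If $G$ is a tree with $\Delta(G)\ge 3$, parts (1), (3), (4) of Lemma \ref{lem1} force highly restricted branching and arm lengths, and a systematic enumeration of the remaining possibilities yields precisely $T_1,\ldots,T_9$ together with the infinite family $S_{1,q}$. If $G$ contains a cycle $C_k$, Lemma \ref{lem1}(2) together with the path-length estimates appearing in the proof of the theorem characterizing $\gamma_{rI}(G)=n$ forces $k\in\{3,4,5,6,7,8\}$; ruling out $k=4$ and $k=6$ (they drop $\gamma_{rI}(G)$ to at most $n-2$ once any pendant structure is attached) and bounding the attached subtrees using parts (1), (3), (5) yields $G\in\{C_3,C_7,C_8\}\cup\{G_1,\ldots,G_{11}\}$.

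The main obstacle will be the cyclic case analysis: after Lemma \ref{lem1} restricts the cycle length and prohibits long arms, many small candidate graphs built from a short cycle plus pendant edges and paths still survive, and each non-listed candidate must be eliminated by exhibiting an explicit RID function of weight at most $n-2$. Matching every allowed configuration either to a specific member of $\mathcal{G}$ or to a weight-reducing assignment is the labour-intensive step; the tree sub-case is more mechanical thanks to the essentially unique branch vertex and the very short arms forced by the lemma, and only the family $S_{1,q}$ must be pulled out separately because it is the sole infinite family in the list.
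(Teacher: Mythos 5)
Your overall architecture matches the paper's: the hard direction is handled by treating Lemma \ref{lem1} as a list of forbidden configurations, splitting into the tree case (bounded diameter, essentially one branch vertex, short arms) and the cyclic case (bounded longest cycle, chordless, restricted pendant structure), and the easy direction is a graph-by-graph verification. However, there is a genuine error in your cyclic case. You claim to rule out longest cycle length $k=4$ on the grounds that a $C_4$ ``drops $\gamma_{rI}(G)$ to at most $n-2$ once any pendant structure is attached.'' This is false, and it contradicts your own conclusion: the graphs $G_6,\dots,G_{10}$ in Figure \ref{fig2} are precisely $C_4$'s with attached pendant vertices and short paths, and they all satisfy $\gamma_{rI}(G)=n-1$ (e.g.\ $G_{10}$ is $C_4$ plus one pendant vertex, with $n=5$ and $\gamma_{rI}=4$). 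If you genuinely eliminated $k=4$ you would lose five of the eleven families in $\mathcal{G}$ and the characterization would be wrong. What actually happens for $k=4$ (as in the paper's Subcase 1.4) is a further analysis: every cycle vertex has degree at most three, at most two nonadjacent cycle vertices have degree three, and the attached paths have bounded length, which yields exactly $G_6,\dots,G_{10}$. Only $k=6$ can be eliminated outright, since assigning $2$ to two antipodal vertices of a $6$-cycle and $0$ to the remaining four already gives weight $n-2$ regardless of pendant structure.

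A second, smaller point: excluding $C_4$ and $C_5$ ``since those satisfy $\gamma_{rI}(G)=n$'' only removes the bare cycles themselves from consideration, not graphs containing them as longest cycles, so this observation cannot substitute for the $k=4$ and $k=5$ subcase analyses (the $k=5$ case produces the families $G_1,\dots,G_5$ and is the most labour-intensive part of the paper's proof, requiring the auxiliary claims that no $C_3$ or $C_4$ hangs off the branch vertex and that at most one long arm exists). Your tree case and your treatment of $S_{1,q}$ are in line with the paper, modulo the routine check of the remaining value patterns beyond $f(a)=f(b)=0$.
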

\begin{proof}
Suppose that $\gamma_{rI}(G)=n-1$. We consider two cases depending on the existence of cycles in the graph $G$.\vspace{0.75mm}

\textit{Case 1.} $G$ is not a tree. Suppose that $C_k:v_{1}v_{2}\cdots,v_{k}v_{1}$ is a cycle on $k\geq9$ vertices. Then $f(v_1)=f(v_4)=f(v_7)=2$, $f(v_2)=f(v_3)=f(v_5)=f(v_6)=f(v_8)=f(v_9)=0$, $f(v_{10})=2$ (if $k\geq 10$) and $f(x)=1$ for other vertices $x$ (if any) defines an RID function of weight $n-2$. This is a contradiction. It follows that there does not exist any cycle on $k\geq9$ vertices in $G$. Let $3\leq k\leq 8$ be the length of a longest cycle $C_{k}:v_{1}v_{2}\cdots,v_{k}v_{1}$ in $G$. Note that there is no chord $v_{i}v_{j}$ between any two vertices of $C$, for otherwise $f(v_{i})=f(v_{j})=0$ and $f(u)=1$ for any other vertex $u$ would be an RID function with weight $n-2$. This is impossible. We distinguish the following possibilities depending on the different values for $k$.\vspace{0.75mm}

\textit{Subcase 1.1.} $k\in\{7,8\}$. Suppose that there exists a vertex $x\in V(G)\setminus V(C)$ adjacent to a vertex on $C=C_8$, say $v_{1}$. Assigning $0$ to $v_1$, $v_{2}$, $v_4$ and $v_{5}$, $2$ to $v_{3}$ and $v_{6}$, and $1$ to the other vertices gives us an RID function of $G$ with weight $n-2$, which is impossible. Therefore, $G\cong C_{8}$. A similar argument shows that $G\cong C_{7}$ if $k=7$.

\textit{Subcase 1.2.} $k=6$. In such a situation, $f(v_{1})=f(v_2)=f(v_4)=f(v_5)=0$, $f(v_3)=f(v_6)=2$ and $f(x)=1$ for any other vertex $x$ defines an RID function of $G$ with weight $n-2$. Therefore, there does not exist any cycle on six vertices in $G$.\vspace{0.75mm}

\textit{Subcase 1.3.} $k=5$. Since $C=C_{5}$ has the RID number $5$, it follows that $V(G)\setminus V(C)\neq \emptyset$. If two adjacent vertices on $C$ have degree at least three, then we have $\gamma_{rI}(G)\leq n-2$ by Part $(1)$ of Lemma \ref{lem1}. Therefore for any two adjacent vertices on $C$, at least one of them has degree two. This implies that at most two (nonadjacent) vertices on $C$ have degree at least three. Let $d(x,C)=\min\{d(x,v)\mid v\in V(C)\}$. Suppose now that $d(x,C)\geq5$ for some vertex $x$ and $xabcdv_{i}$ is a path of length five for some $1\leq i\leq k$, in which $a,b,c,d\notin V(C)$. In such a situation, $(f(x),f(a),f(b),f(c),f(d),f(v_{i}))=(2,0,0,2,0,0)$ and $f(z)=1$ for the other vertices $z$ is an RID function of weigh $n-2$, which is impossible. Therefore, $d(x,C)\leq4$ for each $x\in V(G)$. We now consider two cases depending on the number $1\leq p\leq2$ of vertices of $C$ with degree at least three.\vspace{0.75mm}

\textit{Subcase 1.3.1.} $p=2$. Suppose to the contrary that there exist two vertices $x$ and $y$ with $d(x,C)=2$ and $d(y,C)=1$. Moreover, we may assume that $yv_1\in E(G)$ and $xwv_4$ is a path connecting $x$ to $C$. Then, $f(v_1)=f(v_2)=f(v_4)=f(w)=0$, $f(v_3)=f(x)=2$ and $f(z)=1$ for any other vertex $z$ defines an RID function of $G$ with weight $n-2$, a contradiction. Therefore, $d(x,C)=1$ for any vertex $x\in V(G)\setminus V(C)$. Thus, $G$ is of the form $G_{1}$ in Figure \ref{fig2}.\vspace{0.75mm}

\textit{Subcase 1.3.2.} $p=1$. Let $x$ be a vertex in $V(G)\setminus V(C)$. We claim that deg$(x)\leq2$. Suppose to the contrary that deg$(x)\geq3$, for some $x\in V(G)\setminus V(C)$. Let $w_1...w_t$ be a shortest path connecting $x$ to $C$, in which $x=w_1$ and $v_i=w_t$. If $t=2$, then $f(x)=f(v_i)=0$ and $f(z)=1$ for $z\neq x,v_i$ is an RID function with weight $n-2$, a contradiction. So, $t\geq3$. Without loss of generality, we assume that $i=1$. Suppose now that $t=3$. Then $f(x)=f(w_2)=f(v_2)=f(v_3)=0$, $f(v_1)=f(v_4)=2$ and $f(z)=1$ for the other vertices $z$ is an RID function with weight $n-2$. This is a contradiction. If $t\geq4$, then $f(x)=f(w_2)=f(v_1)=f(v_2)=0$, $f(w_3)=f(v_3)=2$ and $f(z)=1$ for any other vertex $z$ is an RID function with weight $n-2$, which is again a contradiction. Therefore, we have proved that deg$(x)\leq2$ for each $x\in V(G)\setminus V(C)$.

The discussion above guarantees that the induced subgraph $H=G[(V(G)\setminus V(C))\cup\{v_1\}]$ is isomorphic to a union of some graphs in $\{P_2,P_3,P_4,P_5,C_3,C_4,C_5\}$ such that they have only the vertex $v_1$ in common. We proceed with the following series of claims.\vspace{0.75mm}\\
\textbf{Claim A.} \textit{There is no subgraph $C_3$ in $H$.}\vspace{0.25mm}\\
\textit{Proof.} If this is not true, then there exist two adjacent vertices $x,y\in V(G)\setminus V(C)$ which are both adjacent to $v_1$. Then $f(x)=f(y)=f(v_2)=f(v_3)=0$, $f(v_1)=f(v_4)=2$ and $f(z)=1$ for the other vertices $z$ is an RID function of weight $n-2$, which is impossible. Therefore, $H$ does not have any cycle $C_3$ as a subgraph. $(\square)$\vspace{0.75mm}\\
\textbf{Claim B.} \textit{There is no subgraph $C_4$ in $H$.}\vspace{0.25mm}\\
\textit{Proof.} Suppose this is not the case. Let $v_1abcv_1$ be such a $4$-cycle. Then the assignment $f(v_1)=f(a)=f(c)=0$, $f(b)=2$ and $f(z)=1$ for any other vertex $z$ would be an RID function of $G$ of weight $n-2$, a contradiction. Thus, $H$ does not have a cycle $C_4$ as a subgraph. $(\square)$\vspace{0.75mm}\\
\textbf{Claim C.} \textit{There is at most one subgraph among $\{P_4,P_5,C_5\}$ in $H$.}\vspace{0.25mm}\\
\textit{Proof.} If there are at least two subgraphs in $H$ isomorphic to some members of $\{P_4,P_5,C_5\}$, then there are two $4$-paths $P':xw_1w_2v_1$ and $P'':yu_1u_2v_1$ in $H$. In such a situation, $f(w_1)=f(w_2)=f(u_1)=f(u_2)=f(v_2)=f(v_3)=0$, $f(x)=f(y)=f(v_1)=f(v_4)=2$ and $f(z)=1$ for any other vertex $z$ gives us an RID function of weight $n-2$, which is a contradiction. Therefore at most one path $P_4$ exists in $H$, and so, at most one graph among $\{P_4,P_5,C_5\}$ appears in $H$ as a subgraph. $(\square)$\vspace{0.75mm}

We now infer from the above argument that $G$ is one of the graphs $G_2,\cdots,G_5$ depicted in Figure \ref{fig2}.\vspace{0.75mm}

\textit{Subcase 1.4.} $k=4$. Similar to Subcase $1.3$, we have $V(G)\setminus V(C)\neq \emptyset$ in which $C=C_4$. Moreover, at most two nonadjacent vertices on $C$ have degree at least three. Let a vertex on $C$, say $v_1$, have degree at least four. This implies that $f(v_1)=f(v_2)=f(v_4)=0$, $f(v_3)=2$ and $f(x)=1$ for each $x\in V(G)\setminus V(C)$ is an RID function with weight $n-2$, which is impossible. Therefore, each vertex on $C$ has degree at most three. Let $1\leq p\leq2$ be the number on vertices on $C$ of degree three. Similar to Subcase $1.3$, all vertices in $V(G)\setminus V(C)$ have degree at most two. We now deal with two cases depending on the values for $p$.\vspace{0.75mm}

\textit{Sabcase 1.4.1.} $p=2$. Without loss of generality, we may assume that $\deg(v_1)=\deg(v_3)=3$. If there exists a path $v_1abc$ in $G$, then assigning $2$ to $v_1$ and $c$, $0$ to $a$, $b$, $v_2$ and $v_3$, and $1$ to the other vertices gives us an RID function of $G$ with weight $n-2$, which is impossible. Moreover, the existence of two paths $v_1ab$ and $v_3a'b'$ in $G$ leads to the RID function $f(b)=f(b')=2$, $f(v_1)=f(v_3)=f(a)=f(a')=0$ and $f(x)=1$ for any other vertex $x$. So, $\gamma_{rI}(G)\leq \omega(f)=n-2$. This is a contradiction. It is now easy to check that the only graphs $G$ satisfying $\gamma_{rI}(G)=|V(G)|-1$ are isomorphic to $G_6$ or $G_7$ in Figure \ref{fig2}.\vspace{0.75mm}

\textit{Sabcase 1.4.2.} $p=1$. We may assume that $\deg(v_1)=3$. Suppose that there exists a path $v_1abcd$ in $H$. It is readily seen that $f(a)=f(d)=f(v_3)=2$, $f(b)=f(c)=f(v_1)=f(v_2)=f(v_4)=0$ and $f(x)=1$ for the other vertices $x$ defines an RID function of $G$ with weight $n-2$, a contradiction. In such a situation, $G$ is isomorphic to $G_8$, $G_9$ or $G_{10}$ in Figure \ref{fig2}.\vspace{0.75mm}

\textit{Subcase 1.5.} $k=3$. If $V(G)\setminus V(C)=\emptyset$, then clearly $G=C=C_3$. Hence, we assume that $V(G)\setminus V(C)\neq \emptyset$. Again we have $\deg(x)\leq2$ for any $x\in V(G)\setminus V(C)$, by a similar fashion. On the other hand, there exists only one vertex on $C$, say $v_1$, of degree at least three by Part ($1$) of Lemma \ref{lem1}. If there is a path $xabv_1$, then the assignment $f(x)=f(v_1)=2$, $f(a)=f(b)=f(v_2)=f(v_3)=0$ and $f(z)=1$ for the other vertices $z$ would be an RID function of weight $n-2$, which is impossible. This shows that $G$ is of the form $G_{11}$ depicted in Figure \ref{fig2}.\vspace{0.75mm}

\textit{Case 2.} Suppose now that $G=T$ is a nontrivial tree. Note that Part ($2$) of Lemma \ref{lem1} implies that diam$(T)\leq8$. We distinguish the following cases depending on the possible values for diam$(T)$. In each case, we suppose that $P:v_1v_2\cdots v_r$ is a diametral path in $T$ in which $r$ $=$ diam$(T)+1$. Clearly, $v_1$ and $v_r$ are leaves.\vspace{0.75mm}

\textit{Subcase 2.1. diam$(T)=8$.} We have $\deg(v_2)=\deg(v_3)=\deg(v_4)=\deg(v_6)=\deg(v_7)=\deg(v_8)=2$ by Part ($3$) of Lemma \ref{lem1}. If $V(T)\setminus V(P)=\emptyset$, then clearly $T=P=P_9$. So, we may assume that $V(T)\setminus V(P)\neq \emptyset$. If there exists a vertex $x$ at distance three from $v_5$, then $T$ has a subtree illustrated in Part ($4$) of Lemma \ref{lem1}. This is a contradiction. This implies that $T$ is of the form $T_1$ depicted in Figure \ref{fig2}.\vspace{0.75mm}

\textit{Subcase 2.2. diam$(T)=7$.} Since $\gamma_{rI}(P_8)=7$, we may assume that $T\neq P=P_8$. Note that Part ($3$) of Lemma \ref{lem1} implies that $\deg(v_2)=\deg(v_3)=\deg(v_6)=\deg(v_7)=2$. On the other hand, Part ($1$) of the lemma and the fact that $T\neq P=P_8$ show that precisely one of $v_4$ and $v_5$, say $v_4$, has degree at least three. Moreover, there is no vertex $x\in V(T)\setminus V(P)$ at distance three from $v_4$ by Part ($4$) of Lemma \ref{lem1}. Therefore, $T\cong T_2\in \mathcal{G}$.\vspace{0.75mm}

\textit{Subcase 2.3. diam$(T)=6$.} We may assume that $T\neq P=P_7$ as $\gamma_{rI}(P_7)=6$. On the other hand, $\deg(v_2)=\deg(v_6)=2$ by Part ($3$) of Lemma \ref{lem1}. This shows that if $\deg(v_i)\geq3$, then $v_i\in \{v_3,v_4,v_5\}$. Note that none of ($a$) $\deg(v_3),\deg(v_4)\geq3$, ($b$) $\deg(v_3),\deg(v_5)\geq3$ and ($c$) $\deg(v_4),\deg(v_5)\geq3$ is the case because any of them satisfies Part ($1$) or Part ($5$) of Lemma \ref{lem1}. Therefore, precisely one of the cases $\deg(v_3)\geq3$, $\deg(v_4)\geq3$ and $\deg(v_5)\geq3$ happens. By symmetry, we may assume that at most one of $\deg(v_3)\geq3$ and $\deg(v_5)\geq3$ happens. If there exists a vertex $x\in V(T)\setminus V(P)$ at distance three from $v_3$ or $v_4$, then we derive the contradiction diam$(T)\geq7$. Moreover, every vertex $x\in V(T)\setminus V(P)$ is at distance at most two from $v_4$, for otherwise $T$ satisfies Part ($4$) of Lemma \ref{lem1}. Thus, $T\cong T_3\in \mathcal{G}$ or $T\cong T_4\in \mathcal{G}$.\vspace{0.75mm}

\textit{Subcase 2.4. diam$(T)=5$.} Since $\gamma_{rI}(P_6)=6$, it follows that $V(T)\setminus V(P)\neq \emptyset$. In such a situation,\\
($a$) no pair of adjacent vertices in $\{v_2,v_3,v_4,v_5\}$ have degree at least three simultaneously by the first part of Lemma \ref{lem1},\\
($b$) there is no vertex in $V(T)\setminus V(P)$ at distance two from $v_2$ or $v_5$ since diam$(T)=5$, and\\
($c$) there is no vertex in $V(T)\setminus V(P)$ at distance three from $v_3$ or $v_4$ since diam$(T)=5$.\\
Consequently, $T$ is of the form $T_5$, $T_6$ or $T_7$ depicted in Figure \ref{fig2}.\vspace{0.75mm}

\textit{Subcase 2.5. diam$(T)=4$.} Since $\gamma_{rI}(P_5)=5$, we have $V(T)\setminus V(P)\neq \emptyset$. In such a situation,\\
($a$) no pair of adjacent vertices in $\{v_2,v_3,v_4\}$ have degree at least three simultaneously by the first part of Lemma \ref{lem1},\\
($b$) there is no vertex in $V(T)\setminus V(P)$ at distance two from $v_2$ or $v_4$ since diam$(T)=4$, and\\
($c$) there is no vertex in $V(T)\setminus V(P)$ at distance three from $v_3$ since diam$(T)=4$.\\
Therefore, $T$ is of the form $T_8$ or $T_9$ depicted in Figure \ref{fig2}.\vspace{0.75mm}

\textit{Subcase 2.6. diam$(T)=3$.} It is easy to see that $S_{1,q}$ for $q\geq2$ is the only tree $T$ with diameter three satisfying $\gamma_{rI}(T)=|V(T)|-1$.

Conversely, it is not difficult to check that $\gamma_{rI}(G)=|V(G)|-1$ for each $G\in \mathcal{G}\cup\{C_3,C_7,C_8,P_7,\\ P_8,P_9,S_{1,q}\}$. This completes the proof of the theorem.
\end{proof}

\section{Conclusions and problems}

The concept of restrained Italian domination in graphs was initially investigated in this paper. We studied the computational complexity of this concept and proved some bounds on the RID number of graphs. In the case of trees, we characterized all trees attaining the exhibited bound. We also provided the characterizations of graphs with small or large RID numbers. We now conclude the paper  with some problems suggested by this research.\vspace{1mm}\\
$\bullet$ For any graph $G$, $\gamma_{r}(G)\leq \gamma_{rI}(G)\leq2\gamma_{r}(G)$ as already noted in Proposition \ref{Prop1}. It is worthwhile to characterize all graphs $G$ with $\gamma_{r}(G)=\gamma_{rI}(G)$ or $\gamma_{rI}(G)=2\gamma_{r}(G)$.\vspace{1mm}\\
$\bullet$ It is also worthwhile proving some other nontrivial sharp bounds on $\gamma_{r}(G)$ for general graphs $G$ or some well-known families such as bipartite, chordal, planar, triangle-free, or claw-free graphs.\vspace{1mm}\\
$\bullet$ The decision problem RESTRAINED ITALIAN DOMINATION is NP-complete even for bipartite graphs, chordal graphs and planar graphs with maximum degree five, as proved in Theorem \ref{Comp}. By the way, there might be some polynomial-time algorithms for computing the RID number of some well-known families of graphs, for instance, trees. Is it possible to construct a polynomial-time algorithm so as to compute $\gamma_{rI}(T)$ for any tree $T$? 


\end{document}